\newcommand {\tb}{\textbf}
\newcommand {\mb}{\mathbb}
\newcommand {\bZ}{\mb Z}
\newcommand {\bR}{\mb R}
\newcommand {\bC}{\mb C}
\newcommand {\colim}{\textrm{colim}\ }
\newcommand {\cok}{\textrm{coker}}
\newcommand {\ex}{\mathrm{excess}}
\newtheorem{thm}{Theorem}[section]
\newtheorem{mthm}{Theorem}
\newtheorem{crl}[thm]{Corollary}
\newtheorem{lmm}[thm]{Lemma}
\newtheorem{mlmm}[mthm]{Lemma}
\newtheorem{exm}[thm]{Example}
\newtheorem{rmk}[thm]{Remark}
\begin{document}

\title{On spherical classes in $H_*QX$}
\author{Hadi Zare}
\date{}

\maketitle

\begin{abstract}
We give an upper bound on the set of spherical classes in $H_*QX$ when $X = P,S^1$.
This is related to the Curtis conjecture on spherical classes in $H_*Q_0S^0$. The results also provide some control over the bordism classes on of immersions when $X$ is a Thom complex.
\end{abstract}

\section{Introduction and statement of results}
Let $X$ be a CW-complex of finite type and let $Q_0X$ be the base-point component of $QX=\colim\Omega^i\Sigma^i X$. The Curtis conjecture predicts that only the Hopf invariant one and the Kervaire invariant one elements survive under the Hurewicz homomorphism $$h:{_2\pi_*^S}\simeq{_2\pi_*}Q_0S^0\to H_*Q_0S^0.$$
Here, and afterwards, we use ${_2\pi_*}$ for the $2$-primary homotopy, and $H_*$ for the $\bZ/2$-homology. A related problem is to determine the image of the Hurewicz homomorphism
$$h:{_2\pi_*^S}X\simeq {_2\pi_*}Q_0X\to H_*Q_0X$$
for $X$ path connected. The Curtis conjecture is related to this problem. For $X=P$, the infinite dimensional real projective space, the Kahn-Priddy theorem tells us that any spherical class in $H_*Q_0S^0$ pulls back to a spherical class in $H_*QP$. If $X=S^n$ the two problems are related throughout the homology suspension; a spherical class $\xi\in H_*Q_0S^0$ which is not decomposable survives to a spherical class $\sigma_*^k\xi\in H_{*+k}QS^k$ under iterated homology suspension $\sigma_*^k:H_*Q_0S^0\to H_{*+k}QS^k$ for some $k>0$. If $X=\Sigma\bC P_+$ the two problems are related by $S^1$-transfer.\\
While the Curtis conjecture is mostly motivated by stable homotopy theory, the calculation of the image of the Hurewicz homomorphism when $X$ is a Thom complex is also of interest in the bordism theory of immersions. The bordism class of a codimension $k$ immersion $i:M\to N$ with a $V$-structure on its normal bundle corresponds to a homotopy class $f_i:N_+\to QTV$ through the Pontrjagin-construction, where $V$ is a $k$-dimensional vector bundle. It is then well known that the normal Stiefel-Whitney numbers of such an immersion, and the immersions of its self-intersection manifolds, are determined by $(f_i)_*[N_+]$; the height $r$ elements in $(f_i)_*[N_+]$ correspond to the $r$-fold self intersection manifold \cite[Lemma 2.2]{0}. In particular, the case of framed codimension $k$ immersions in $\bR^{n+k}$ corresponds to spherical classes in $H_{n+k}QS^k$.\\ \\
It is possible to follow two distinct, but related approaches, in attempting to resolve the Curtis conjecture or more generally to attack the second problem. Firstly, we may try to calculate the image of $f\in{_2\pi_*}QX$ under $h$ provided that we a suitable description of $f\in{_2\pi_*^S}X\simeq{_2\pi_*}Q_0X$, such as a (stable) decomposition of $f$. Secondly, we may use basic properties of spherical classes to find an upper bound for the image of $h$, and then use homological methods to eliminate some of these classes from being in the image of $h$.\\
Following the first approach, we have shown that the Curtis conjecture is valid on ${_2\pi_*}J$ where $J$ is the fibre of $\psi^3-1:BSO\to BSO$ \cite[Theorem 1]{101}. Hence, in order to complete the proof of this conjecture, one only needs to calculate the image of $h:{_2\pi_*}Q_0S^0\to H_*Q_0S^0$ on ${_2\pi_*}\cok J$; or resolve the conjecture on some other sub-summands of ${_2\pi_*}\cok J$ in order to see a pattern. For instance, it is straightforward to show that $h\eta_i=0$ \cite{4} where $\eta_i\in{_2\pi_{2^i}^S}\simeq{_2\pi_{2^i}}Q_0S^0$ denotes Mahowald's family
\cite[Theorem 1]{10}. However, we do not have such information about every element in ${_2\pi_*^S}$. Nevertheless, the author believes that, in principle, one should be able to use the decompositions provided by results of Joel Cohen \cite{2} and apply methods similar to \cite{101} in order to obtain a proof of the Curtis conjecture on some other summands of ${_2\pi_*^S}$.\\
Indeed, a deeper insight to this conjecture, might be provided by observing that ${_2\pi_*}J$ does precisely correspond to that part of ${_2\pi_*^S}$ detected by $v_1$-periodic groups. Hence, assuming that one has some knowledge about $v_i$-periodic homotopy groups, and the infinite loop spaces associated with these groups,  may provide a deeper understanding of the problem.\\
The majority of the existing literature, attempting to prove the Curtis conjecture, follow the second approach including applications of algebraic transfer maps, Dickson invariants and etc. (see \cite{8} for a recent account). These approaches use unstable Adams spectral sequences, and lots of heavy algebraic machinery.\\
Our approach, however, is to use one basic property of spherical classes: a spherical class is a primitive $A$-annihilated class. Here, $\xi\in H_*X$ is called $A$-annihilated if $Sq^r_*\xi=0$ for $r>0$ where $Sq^r_*:H_*X\to H_{*-r}X$ is dual to $Sq^r$, $A$ being the mod $2$ Steenrod algebra. Therefore, the set of $A$-annihilated primitives determines an upper bound for the set of spherical classes. Our results provide some control over the set of all $A$-annihilated primitive classes in $H_*QX$ by giving some restriction on the form of these classes. Finding such an upper bound is a relatively easy task when $X$ is path connected since as Hopf algebras
\begin{equation}\label{eq1}
H_*QX\simeq\bZ/2[Q^Ix_\mu:\ex(Q^Ix_\mu)>0, I\textrm{ admissible}]
\end{equation}
where $\{x_\mu\}$ is an additive basis for $\widetilde{H}_*X$, $I=(i_1,\ldots,i_s)$ is a sequence of positive integers, called admissible if $i_j\leqslant 2i_{j+1}$ for $1\leqslant j\leqslant s-1$, and $\ex(Q^Ix_\mu)=i_1-(i_2+\cdots+i_s+\dim x_\mu)$. This description determines the $R$-module structure of $H_*QX$ where $R$ denote the mod $2$ Dyer-Lashof algebra (see \cite[Chapter 5]{15} for a detailed account on $R$). The $A$-module structure of $H_*QX$ is determined by (\ref{eq1}) together with the Nishida relations which we will recall in section $3$.\\
First, we determine the $A$-annihilated monomial generators $Q^Ix$. Define $\rho:\mathbb{N}\to\mathbb{N}$ by $\rho(n)=\min\{i:n_i=0\}$ for $n=\sum_{i=0}^\infty n_i2^i$ with $n_i\in\{0,1\}$. We have the following.

\begin{mthm}
Let $Q^Ix\in H_*QX$ with $I=(i_1,\ldots,i_s)$ admissible, and $\ex(Q^Ix)>0$. The class $Q^Ix$ is $A$-annihilated if and only if the following conditions are satisfied:\\
$(1)$ $x\in\widetilde{H}_*X$ is $A$-annihilated; $(2)$ $\ex(Q^Ix)<2^{\rho(i_1)}$; and $(3)$ $0\leqslant 2i_{j+1}-i_j<2^{\rho(i_{j+1})}$.\\
If $s=1$ then the first two conditions determine all $A$-annihilated classes of the form $Q^ix$ of positive excess.
\end{mthm}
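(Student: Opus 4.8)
The plan is to compute $Sq^r_*Q^Ix$ for every $r\geqslant 1$ by pushing $Sq^r_*$ through the Dyer--Lashof operations $Q^{i_1},\dots,Q^{i_s}$ one at a time, using the Nishida relation $Sq^r_*Q^n=\sum_{t}\binom{n-r}{r-2t}Q^{n-r+t}Sq^t_*$ recalled in section~3. Iterating, $Sq^r_*Q^Ix$ becomes a sum over ``Nishida paths'' $r=r_0\geqslant 2r_1$, $r_1\geqslant 2r_2$, $\dots$, $r_{s-1}\geqslant 2r_s$, the term for such a path being $\bigl(\prod_{k=1}^s\binom{i_k-r_{k-1}}{r_{k-1}-2r_k}\bigr)\,Q^{(j_1,\dots,j_s)}Sq^{r_s}_*x$ with $j_k=i_k-r_{k-1}+r_k$. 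A direct computation gives $\ex\bigl(Q^{(j_1,\dots,j_s)}Sq^{r_s}_*x\bigr)=\ex(Q^Ix)-r+2r_1$, and likewise each truncation $(j_m,\dots,j_s)$ carries $\ex(Q^{(i_m,\dots,i_s)}x)$ shifted by a controlled amount; since admissibility forces $\ex(Q^{(i_1,\dots,i_s)}x)\leqslant\ex(Q^{(i_2,\dots,i_s)}x)\leqslant\cdots$, this bookkeeping tells us which Dyer--Lashof monomials are nonzero. Under hypothesis~$(1)$ only the paths with $r_s=0$ survive, so everything reduces to a statement about products of binomial coefficients mod~$2$ and about the (non)vanishing of admissible Dyer--Lashof monomials --- the latter governed by excess once the non-admissible monomials that arise have been rewritten by the Adem relations.

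For \emph{necessity} I would produce, whenever a hypothesis fails, an explicit $r$ and a non-cancelling term showing $Sq^r_*Q^Ix\neq 0$. If $(1)$ fails, let $t\geqslant 1$ be minimal with $Sq^t_*x\neq 0$ and take $r=2^{s}t$; then $r_{k-1}\geqslant 2r_k$ together with $r_s=t$ forces the \emph{unique} path $r_k=2^{s-k}t$, which contributes $Q^{(i_1-2^{s-1}t,\dots,i_s-t)}Sq^t_*x$ with coefficient $1$; one checks this index sequence is admissible of excess $\ex(Q^Ix)>0$, so the monomial is a genuine polynomial generator built on a class of dimension $\dim x-t$ and therefore cannot be cancelled against the $r_s=0$ terms, which are built on $x$. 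If $(1)$ holds but $(2)$ fails, so $\ex(Q^Ix)\geqslant 2^{\rho(i_1)}$, take $r=2^{\rho(i_1)}$: a Lucas/Kummer-carry computation gives $\binom{i_1-r}{r}\equiv 1\pmod 2$ in exactly this range, $Q^{(i_1-r,i_2,\dots,i_s)}x$ is nonzero because $\ex(Q^Ix)-r\geqslant 0$ (and it is a square, in a complementary summand, when this excess is $0$), and one must check this monomial is not recreated by Adem reduction of the other paths. If $(3)$ fails at some $j$, one instead chooses $r$ so that the ``halving'' terms $\binom{i_k-r_{k-1}}{0}$ of the Nishida relation carry a copy of $Sq^{2^{\rho(i_{j+1})}}_*$ cleanly past $Q^{i_1},\dots,Q^{i_j}$, moving the obstruction onto $Q^{i_{j+1}}$, where $\ex(Q^{(i_{j+1},\dots,i_s)}x)-\ex(Q^{(i_j,\dots,i_s)}x)=2i_{j+1}-i_j$ takes over the role played by $\ex(Q^Ix)$ for~$(2)$.

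For \emph{sufficiency} I would induct on $s$. The base case $s=1$ is clean and Adem-free: once $(1)$ holds, $Sq^r_*Q^{i_1}x=\binom{i_1-r}{r}Q^{i_1-r}x$, which vanishes for every $r\geqslant 1$ if and only if $\binom{i_1-r}{r}\equiv 0$ for all $1\leqslant r\leqslant\ex(Q^{i_1}x)$, if and only if $\ex(Q^{i_1}x)<2^{\rho(i_1)}$; this is $(1)+(2)$, and it also settles the final sentence of the Theorem. For the inductive step, write $Q^Ix=Q^{i_1}(Q^{I'}x)$ with $I'=(i_2,\dots,i_s)$ and substitute into $Sq^r_*Q^{i_1}(Q^{I'}x)=\sum_k\binom{i_1-r}{r-2k}Q^{i_1-r+k}\bigl(Sq^k_*Q^{I'}x\bigr)$. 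The subtlety is that $Q^{I'}x$ need \emph{not} itself be $A$-annihilated --- hypothesis $(3)$ for $I$ only gives $\ex(Q^{I'}x)-\ex(Q^Ix)<2^{\rho(i_2)}$, not $\ex(Q^{I'}x)<2^{\rho(i_2)}$ --- so the induction cannot simply invoke ``$Sq^k_*Q^{I'}x=0$''. I would strengthen the inductive statement to a description of the low-excess part of $Sq^k_*Q^{I'}x$ for every $k$, and then verify, using $(2)$ and $(3)$ for $I$, that in the displayed sum each term either picks up an even binomial coefficient --- by a carry argument tied to the binary position $\rho(i_1)$ --- or contributes only Dyer--Lashof monomials of negative excess, hence zero.

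The main obstacle is this last verification, and with it the ``no collision'' statement it relies on: because the Adem relations \emph{strictly decrease} excess, a term coming from a non-leading Nishida path could in principle, after Adem reduction, land on exactly the admissible monomial that carries a given leading term. Ruling this out --- equivalently, showing that the coefficient of $Q^{(i_1-r,i_2,\dots,i_s)}x$ in $Sq^r_*Q^Ix$ is precisely $\binom{i_1-r}{r}$, and the analogous statement for the monomials relevant to $(3)$ --- is where I expect most of the work to lie, and it is what forces the inductive hypothesis to carry the whole excess filtration of $Sq^k_*Q^Ix$ rather than just its bottom piece.
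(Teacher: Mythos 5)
Your necessity sketch is essentially the paper's: for each failed hypothesis you apply one well-chosen $Sq^{2^?}_*$ and exhibit a non-cancelling admissible leading term of unchanged excess (the paper's three lemmas use $Sq^{2^{s+t}}_*$, $Sq^{2^{\rho(i_1)}}_*$ and $Sq^{2^{\rho(i_{j+1})+j}}_*$ respectively), and your base case $s=1$ and the final sentence of the Theorem are handled the same way. But the proposal has a genuine gap exactly where you say you ``expect most of the work to lie'': the verification that, under condition $(3)$, every non-leading Nishida path, after Adem reduction, either carries an even coefficient or lands in negative excess. This is not a detail to be deferred --- it is the whole content of the sufficiency direction, and your proposed fix (an induction on $s$ with a strengthened inductive statement describing the low-excess part of $Sq^k_*Q^{I'}x$) is never formulated, let alone proved; as you yourself note, the naive induction fails because $Q^{I'}x$ need not be $A$-annihilated. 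The paper closes this by quoting a known quantitative lemma of Curtis (Lemma 6.2 of \cite{3}, reproduced as Lemma \ref{curtis}; alternatively Wellington \cite{15}): if $2i_{j+1}-i_j<2^{\rho(i_{j+1})}$ for all $j$, then writing $N(Sq^a_*,Q^I)=\sum_{K\ \mathrm{admissible}}Q^K$ one has $\ex(K)\leqslant\ex(I)-2^{\rho(i_1)}$, and moreover $\rho(i_1)\leqslant\cdots\leqslant\rho(i_s)$. With this, sufficiency is a three-line argument (condition $(2)$ forces all surviving admissible terms to have negative excess, hence vanish), and the same $\rho$-monotonicity is what kills the non-leading terms in the case where $(2)$ fails --- the other place where your proposal only says ``one must check this monomial is not recreated by Adem reduction''. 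Without proving or citing this excess-drop lemma, your argument is a plan rather than a proof.

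Two smaller points: in the case where $(3)$ fails you should take the \emph{least} $j$ with $2i_{j+1}-i_j\geqslant 2^{\rho(i_{j+1})}$; the paper notes (cf.\ Example \ref{exm}) that otherwise the chosen operation can act trivially, and your sketch does not impose this. Your treatment of the case where $(1)$ fails (minimal $t$ with $Sq^t_*x\neq 0$, forced path, leading term built on a class in a strictly lower dimension than $x$) is correct and matches the paper's argument in substance.
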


The conditions in the above theorem ensure that if $Q^Ix$ is an $A$-annihilated class with $\ex(Q^Ix)>0$ then $I$ cannot have any even entry.\\ \\
Since the operations $Sq^r_*$ are additive homomorphisms, it is then clear that sum of $A$-annihilated terms is $A$-annihilated. However, the converse is not true, i.e. if $\xi=\sum Q^Ix+D$ is $A$-annihilated where $\ex(Q^Ix)>0$, and $D$ is a sum of decomposable terms, then it is not always true that $Q^Ix$ is $A$-annihilated. For instance, $Q^{2^t}a_{2^t-1}+a_{2^t}a_{2^t-1}\in H_*QP$ is annihilated by $Sq^1_*$ whereas $Q^{2^t}a_{2^t-1}$ is not. In particular, the class $Q^2a_1+a_1a_2$ is $A$-annihilated whereas $Q^2a_1$ is not $A$-annihilated. In order to deal with such cases, we consider applying the iterated homology suspension $\sigma_*^k:H_*QX\to H_{*+k}Q\Sigma^kX$. The fact that Steenrod operations are stable implies that if $\xi$ is $A$-annihilated then so is $\sigma_*^k\xi$ for all $k>0$. In section 2, we shall define an order on the monomial generators $Q^Ix\in H_*QX$ called the `total order'. The basic idea, then is that after finitely many times suspensions, terms of maximal total order will be left and these should be $A$-annihilated.\\
Given $I=(i_1,\ldots,i_s)$ define $I_j=(i_j,\ldots,i_s)$ for $1\leqslant j\leqslant s$. Our next result is as following.

\begin{mthm}\label{t2}
Let $X=P,S^1,\Sigma\bC P_+$ and let $\xi\in H_*QX$ be an $A$-annihilated class such that $\sigma_*\xi\neq 0$ where $\sigma_*:H_*QX\to H_{*+1}Q\Sigma X$ is the homology suspension. Then
$$\xi=Q^Ix+O$$
where $Q^{I_j}\Sigma^{n_j}x\in H_*Q\Sigma^{n_j}X$ is $A$-annihilated for some $j\leqslant s$, $n_j>0$, and $O$ denotes the sum of terms of lower total order and decomposable terms. Here, $\Sigma x$ is the image of $x$ under $\widetilde{H}_*X\simeq \widetilde{H}_{*+1}\Sigma X$.
\end{mthm}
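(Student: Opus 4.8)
\emph{Proof proposal.} The plan is to watch the monomial generators of $H_*QX$ collapse under iterated homology suspension. First I would expand $\xi$ in the polynomial basis (\ref{eq1}) and, among the monomial generators $Q^{I^\alpha}x_{\mu(\alpha)}$ occurring in $\xi$ with nonzero coefficient, take $Q^Ix$ to be the one of maximal total order; since $\sigma_*\xi\neq 0$ the class $\xi$ is not decomposable, so such a generator exists, and collecting the remaining monomials (those of lower total order together with every decomposable monomial) into $O$ gives the asserted decomposition $\xi=Q^Ix+O$. Throughout I would use three standard facts: (i) $\sigma_*^k$ preserves $A$-annihilated classes, because the Steenrod squares are stable; (ii) on a monomial generator $\sigma_*^k(Q^Jy)=Q^J\Sigma^ky$, which is again a monomial generator while its excess stays positive, equals the square $(Q^{J_2}\Sigma^ky)^2$ at $k=\ex(Q^Jy)$, and vanishes for larger $k$, while $\sigma_*$ kills all decomposables; in particular $\sigma_*$ is injective on the span of monomial generators, so no cancellation occurs among their images; (iii) for the Frobenius $z\mapsto z^2$ of the polynomial algebra $H_*QY$ one has $Sq^r_*(z^{2^a})=0$ unless $2^a\mid r$, in which case $Sq^r_*(z^{2^a})=(Sq^{r/2^a}_*z)^{2^a}$, and this Frobenius is injective; hence $z^{2^a}$ is $A$-annihilated if and only if $z$ is, and, via $\sum_\beta z_\beta^2=(\sum_\beta z_\beta)^2$ in characteristic $2$, a sum of squares is $A$-annihilated if and only if the sum of their square roots is.

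\emph{The main step.} Write $e_j=\ex(Q^{I_j}x)$. From admissibility of $I$ together with $\ex(Q^Ix)>0$ one checks at once that $0<e_1\leqslant e_2\leqslant\cdots\leqslant e_s$ (and that $e_1<\dim(Q^Ix)=\deg\xi$, since $\dim x\geqslant 1$); the same applies to every monomial generator appearing in $\xi$. The crucial input from Section 2 is that the total order refines this behaviour under suspension: a generator of higher total order survives no fewer iterations of $\sigma_*$ than one of lower total order, and $\sigma_*$ is monotone for the total order. Let $r\geqslant 0$ be the length of the initial plateau $e_1=\cdots=e_r<e_{r+1}$ (so $r=s$ if the sequence is constant), and set $j=\min(r+1,s)\leqslant s$ and $n_j=e_1>0$. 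Applying $\sigma_*$ exactly $e_1$ times: every original decomposable in $O$ dies after one step, every lower-total-order generator (whose excess does not exceed $e_1$) has by step $e_1$ become decomposable or vanished, and the leading generator collapses, excess by excess, to
\[ \sigma_*^{e_1}\xi \;=\; \big(Q^{I_j}\Sigma^{n_j}x\big)^{2^{r}} \;+\; (\text{terms of strictly lower total order, all of them powers of generators}). \]
Now $\sigma_*^{e_1}\xi$ is $A$-annihilated by fact (i); using fact (iii) to separate the $2$-adic valuations of the exponents occurring and the monotonicity of the total order to ensure that the distinguished summand is not cancelled or entangled with the rest under $Sq^*_*$, I would peel off that summand and conclude that $\big(Q^{I_j}\Sigma^{n_j}x\big)^{2^r}$, hence $Q^{I_j}\Sigma^{n_j}x$, is $A$-annihilated. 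When $r=s$ this last class is $(\Sigma^{e_1}x)^2=Q^{I_s}\Sigma^{e_1}x$, so the conclusion still lands with $j=s\leqslant s$. If residual genuine sums of generators survive the peeling (the case $e_{r+1}>e_1$ leaving $\zeta:=\sum_\alpha Q^{I^\alpha_{r_\alpha+1}}\Sigma^{e_1}x_{\mu(\alpha)}$ with $\sigma_*\zeta\neq 0$), one closes by induction on homological degree, noting $\deg\zeta=\tfrac12(\deg\xi+e_1)<\deg\xi$ and that the leading generator of $\zeta$ is $Q^{I_{r+1}}\Sigma^{e_1}x$, so the inductive conclusion for $\zeta$ reads off as a further truncation–suspension $Q^{I_{\ell+1}}\Sigma^{m_\ell+e_1}x$ of $Q^Ix$, which is of the required form.

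\emph{Where the difficulty lies.} The combinatorial heart is the isolation claim in the main step: one must prove, from the precise definition of the total order in Section 2 and from the fact that $\widetilde H_*X$ carries at most one class in each relevant degree when $X=P,S^1,\Sigma\bC P_+$, that after $e_1$ suspensions only descendants of the generators of maximal excess survive, that the distinguished summand $\big(Q^{I_j}\Sigma^{n_j}x\big)^{2^r}$ cannot cancel against, or be mixed with, the lower-total-order survivors under the Steenrod action, and that this persists through the inductive peeling. For a general space $X$, with several homology classes concentrated in one degree, precisely these isolation statements fail — which is the reason the theorem is stated only for $X=P,S^1,\Sigma\bC P_+$.
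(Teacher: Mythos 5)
Your overall strategy --- suspend the class until the leading term collapses to a power of its truncation, use stability of the Steenrod action and the Frobenius to pass to square roots, and recurse --- is essentially the paper's strategy, but there is a genuine gap at the decisive step. When several generators in $\xi$ share the maximal excess $e_1$, after $e_1$ suspensions you obtain a sum of $2^{r_\alpha}$-th powers of generators. If the exponents agree (say all equal to $2$), this sum is $(z_1+z_2+\cdots)^2$, and $A$-annihilatedness of $\sigma_*^{e_1}\xi$ only yields that the \emph{sum of the roots} is $A$-annihilated; you cannot ``peel off'' the distinguished summand $\bigl(Q^{I_j}\Sigma^{n_j}x\bigr)^{2^r}$ and conclude that it alone is $A$-annihilated, which is exactly what your main step asserts --- the paper's own example $Q^{2^t}a_{2^t-1}+a_{2^t}a_{2^t-1}$ shows that summands of an $A$-annihilated class need not be $A$-annihilated. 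If the exponents disagree, your proposed ``separation of the $2$-adic valuations'' together with ``monotonicity of the total order'' is precisely the unproven combinatorial heart; you flag it yourself in the last paragraph but never supply it, and it is not a routine verification, since Nishida plus Adem relations can convert positive-excess terms into squares and create interactions between powers with different exponents.

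The paper closes both issues by an input your proposal never uses: Corollary \ref{c1} and Lemma \ref{55} (the odd-entry results underlying Theorem 3). For an $A$-annihilated class (or for its suspension, in the odd-dimensional case) every admissible monomial of positive excess has only odd entries, so the lower-indexed sequences $E$ are \emph{strictly} increasing. Consequently, after $e_1$ suspensions every surviving term is exactly a square of a positive-excess generator, $\sigma_*^{e_1}\xi$ is a single perfect square of a sum, the Frobenius argument shows the whole root sum is $A$-annihilated, and one resets $\xi$ to this root and repeats (the paper's Cases 0, 1-1, 1-2), the recursion terminating because the length of $I$ drops. Your induction on homological degree is meant to play the role of this recursion, but without the odd-entry input you have no control over the exponents $2^{r_\alpha}$, over the positivity of excess of the roots, or over cancellation between powers with different exponents, so the proof as proposed does not go through; it would become correct if you first established (or quoted) the odd-entry statements and then replaced the ``peeling'' by the pass-to-the-root-and-recurse step.
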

A key fact in the proof of this theorem is that the spaces $S^1,P,\Sigma\bC P_+$ have the following property: it impossible to have $A$-annihilated classes of the form $Q^Ix+Q^Iy$, with $x\neq y$, where neither of $x,y$ is $A$-annihilated but $x+y$ is. The above theorem then is true for spaces like $P^n\times P^m$ or any arbitrary finite product of projective spaces and spheres. A simple example where the above theorem does not hold is obtained by taking $X$ to be the mapping cone of $\eta\vee3\eta:S^3\vee S^3\to S^2$ where to the eyes of $\bZ/2$-Steenrod operations the sum of classes at dimension $4$ is $A$-annihilated.\\
Next, we determine the type of terms $Q^Ix$ which can contribute to a primitive $A$-annihilated class $\xi\in H_*QX$.

\begin{mthm}\label{l1}
Let $\xi\in H_*Q_0X$ be an $A$-annihilated primitive class. The following statements hold.\\
$(1)$ If $\xi$ is even dimensional and $\xi=\sum Q^Ix$ modulo decomposable terms where $I$ only has odd entries.\\
$(2)$ If $\xi$ is an odd dimensional class and $X=\Sigma Y$ and $\xi=\sum Q^Ix$ modulo decomposable terms where $I$ only has odd entries.
\end{mthm}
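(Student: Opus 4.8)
The plan is to reduce part (1) to part (2) by a single homology suspension, and to prove part (2) by combining the Hopf-algebra structure of $H_*Q\Sigma Y$ with the Nishida relations, running the same bookkeeping as in Theorem 1.

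Write $\xi\in H_*Q_0X$ in the monomial basis of (\ref{eq1}), say $\xi=\sum_\alpha Q^{I_\alpha}x_{\mu_\alpha}+D$ with $D$ a sum of decomposable monomials and the $Q^{I_\alpha}x_{\mu_\alpha}$ distinct monomial generators (so $\ex(Q^{I_\alpha}x_{\mu_\alpha})>0$). A monomial generator of excess exactly $1$ has dimension $2i_1-1$, hence odd; so if $\xi$ is even dimensional then every $Q^{I_\alpha}x_{\mu_\alpha}$ has excess $\geqslant 2$. Now apply the homology suspension $\sigma_*\colon H_*QX\to H_{*+1}Q\Sigma X$: it annihilates $D$, satisfies $\sigma_*(Q^Ix)=Q^I\Sigma x$, and is $A$-linear, so $\sigma_*\xi=\sum_\alpha Q^{I_\alpha}\Sigma x_{\mu_\alpha}$ is $A$-annihilated; since $\ex(Q^{I_\alpha}\Sigma x_{\mu_\alpha})=\ex(Q^{I_\alpha}x_{\mu_\alpha})-1\geqslant 1$, each summand is again a monomial generator and distinct values of $\alpha$ give distinct ones. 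Moreover $\sigma_*\xi$ is primitive, as the homology suspension always takes values in the primitives, it is odd dimensional, and $\Sigma X$ is a suspension. So part (2), applied to $\sigma_*\xi$, shows that each $I_\alpha$ has only odd entries, which is exactly the assertion of part (1). (If $\sigma_*\xi=0$ then $\xi$ is decomposable and (1) is vacuous.)

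It remains to prove (2), so let $X=\Sigma Y$. Since the reduced diagonal of a suspension is null-homotopic, $\widetilde H_*X$ is a trivial coalgebra, so each $x_\mu$ is primitive; as the Dyer--Lashof operations carry primitives to primitives, every monomial generator $Q^Ix_\mu$ of $H_*QX$ is primitive, i.e. $H_*QX$ is a primitively generated polynomial Hopf algebra over $\bZ/2$, and its primitives are therefore spanned by the classes $(Q^Ix_\mu)^{2^k}$, $k\geqslant 0$. As $\dim(Q^Ix_\mu)^{2^k}=2^k\dim(Q^Ix_\mu)$ and $\dim\xi$ is odd, only $k=0$ can occur, so $\xi=\sum_\alpha Q^{I_\alpha}x_{\mu_\alpha}$ is \emph{exactly} a finite sum of distinct monomial generators and $D=0$. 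Now I read off the oddness of the entries from $Sq^r_*\xi=0$ by induction on the position of the entry. For $r=1$ the Nishida relations give $Sq^1_*(Q^{I_\alpha}x_{\mu_\alpha})=(i_1^{(\alpha)}-1)Q^{(i_1^{(\alpha)}-1,\,i_2^{(\alpha)},\ldots)}x_{\mu_\alpha}$ when $I_\alpha$ is nonempty and $Sq^1_*x_{\mu_\alpha}$ otherwise; the three families that appear --- generators with a nonempty index, squares of generators, and classes in $\widetilde H_*X$ --- span complementary parts of $H_*QX$, so the first two sums vanish separately, and in each of them distinct values of $\alpha$ contribute distinct basis monomials (the excess distinguishing generators from squares), so every $i_1^{(\alpha)}-1$, hence every $i_1^{(\alpha)}$, is odd. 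Assume now that $i_1^{(\alpha)},\ldots,i_k^{(\alpha)}$ are odd for all $\alpha$ and apply $Sq^{2^k}_*$. The established oddness keeps the relevant Dyer--Lashof monomials admissible (for instance $i_k\leqslant 2i_{k+1}$ together with $i_k$ odd forces $i_k-1\leqslant 2(i_{k+1}-1)$), the term of $Sq^{2^k}_*(Q^{I_\alpha}x_{\mu_\alpha})$ that descends through $k$ operations keeps the same (positive) excess and so is again a genuine monomial generator, with coefficient $i_{k+1}^{(\alpha)}-1$, and its index has prescribed parities and sizes that make it maximal in the total order of Section 2 among all the Nishida terms; matching it against the other terms, linear independence of monomials forces $i_{k+1}^{(\alpha)}$ to be odd. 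Iterating, every $I_\alpha$ has only odd entries.

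The hard part is the last step: separating, inside the identity $Sq^{2^k}_*\xi=0$, the one Nishida term that detects the parity of $i_{k+1}^{(\alpha)}$ from the many other terms it could in principle cancel against. This is precisely what the `total order' of Section 2 is designed to control, and the computation is parallel to the proof of Theorem 1; the genuinely new ingredients are the two reductions to a sum of monomial generators --- via primitive generation of $H_*Q\Sigma Y$ in part (2), and via the parity/excess observation in part (1).
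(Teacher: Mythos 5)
Your two reductions are sound, and in places cleaner than the paper's own route: deducing (1) from (2) by one homology suspension (using that excess and dimension have the same parity, so an even dimensional class has all its indecomposable terms of excess at least $2$, and that $\sigma_*$ is $A$-linear, kills decomposables, sends generators to generators and lands in primitives) is a legitimate alternative to the paper's direct argument via Corollary \ref{c1}; and identifying the primitives of the primitively generated polynomial algebra $H_*Q\Sigma Y$ with the span of the $(Q^Ix_\mu)^{2^k}$, so that oddness of $\dim\xi$ forces $D=0$, is equivalent to the paper's observation in Lemma \ref{53} that a decomposable primitive is a square. The base case with $Sq^1_*$ also agrees with the paper (Lemma \ref{lemma} together with the $D=0$ step).

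The genuine gap is in the inductive step, exactly the point you yourself flag as ``the hard part''. Total-order maximality is not the mechanism that prevents cancellation in $Sq^{2^k}_*\xi=0$: the leading term of $Sq^{2^k}_*Q^{I_\alpha}x_{\mu_\alpha}$ need not be maximal among \emph{all} Nishida terms of $Sq^{2^k}_*\xi$, since a non-leading term produced by a different monomial $Q^{I_\beta}x_{\mu_\beta}$ of larger total order can dominate it, and ``linear independence of monomials'' begs precisely the question of whether the leading admissible basis element survives with nonzero total coefficient after the Adem relations are applied to everything else. The paper's Lemma \ref{55} settles this not by the total order but by a parity bookkeeping based on Remark \ref{relations}: with $i_1,\ldots,i_k$ odd, the leading admissible term $Q^{i_1-2^{k-1}}\cdots Q^{i_k-1}Q^{i_{k+1}-1}Q^{I_{k+2}}x$ occurs (with coefficient $1$) exactly when $i_{k+1}$ is even, and then its $(k+1)$-st entry $i_{k+1}-1$ is odd, whereas every other term, once the Adem relations are used to rewrite it admissibly, has an \emph{even} $(k+1)$-st entry, because the Nishida relations preserve odd prefixes and the Adem relation applied to a pair $Q^{\mathrm{odd}}Q^{\mathrm{even}}$ only produces pairs $Q^{\mathrm{odd}}Q^{\mathrm{even}}$. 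Hence the distinct leading terms coming from distinct $\alpha$ would have to cancel among themselves, which is impossible. This parity analysis (or an equivalent substitute) is the missing ingredient; appealing to the total order and to ``the computation parallel to Theorem 1'' does not deliver it, since Theorem 1 only treats a single monomial, where cross-term cancellation never arises.
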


We note that when $X$ is not a suspension, there is a counter example for $(2)$ in the above theorem. The class $Q^2a_1+a_1a_2+a_1^3+a_3\in H_3QP$ is an $A$-annihilated primitive class which does not satisfy the above theorem.\\ \\
If $\xi\in H_*Q_0X$ is an $A$-annihilated primitive class, so is $\xi^{2^t}$ for all $t$. Our final observation eliminates higher powers of $2$ when $\xi$ is spherical and $X$ is a suspension.
For a given space $X$ we write $r_X:H_*X\to H_*X$ for the square root map, dual to the cup squaring map $H^*X\to H^*X$ given by $x\mapsto x^2$.
\begin{mlmm}
Suppose $X=\Sigma Y$ such that $Q^Ix+Q^Jy\in\ker r$ for $x\neq y$ if and only if $Q^Ix,Q^Jy\in\ker r$ where $r:H_*QY\to H_*QY$ is the square root map. Then the following statements hold.\\
$(1)$ If $\xi=\zeta^{2^t}\in H_*QX$ is a spherical class with $\sigma_*\zeta\neq 0$ then $t\leqslant1$.\\
$(2)$ If $\xi=\zeta^2$ is spherical with $\sigma_*\zeta\neq 0$ then $\zeta$ is an odd dimensional class.
\end{mlmm}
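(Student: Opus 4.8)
The plan is to exploit the interaction between the square root map $r$ and the homology suspension $\sigma_*$, together with Theorem \ref{l1} which already restricts the shape of an $A$-annihilated primitive class. First I would recall that a spherical class is a primitive $A$-annihilated class, so $\xi=\zeta^{2^t}$ is primitive; but a primitive class in the polynomial Hopf algebra $H_*QX$ of (\ref{eq1}) is a $2^t$-th power only if $\zeta$ itself is primitive (the primitives in a polynomial Hopf algebra over $\bZ/2$ are spanned by the $2^k$-th powers of the polynomial generators), so $\zeta$ is an $A$-annihilated primitive class as well and Theorem \ref{l1} applies to it. Since $X=\Sigma Y$, if $\zeta$ is odd dimensional then $\zeta=\sum Q^I x+(\text{decomposables})$ with every $I$ having only odd entries; if $\zeta$ is even dimensional the same holds by part $(1)$. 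In either case the leading (maximal total order) term of $\zeta$ is some $Q^I x$ with $I$ admissible and, by Theorem 1.2 (the first displayed Theorem), after suspending finitely many times this leading term becomes an honest $A$-annihilated monomial generator, so by the first Theorem its entries are all odd and $\ex(Q^I x)<2^{\rho(i_1)}$.

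Next I would analyze $r$ on $\xi=\zeta^{2^t}$. Writing $\zeta=\sum Q^{I}x + D$ as above, we have $\zeta^{2}=\sum Q^{2I}x^{2}+\dots$ where $Q^{2I}$ means doubling every entry — more precisely, using the Cartan formula and the fact that in $H_*QX$ one has $(Q^J y)^2=Q^{J,|y|+\sum J}\cdots$ type relations; the cleanest route is: $r(\zeta^{2})=\zeta$, so $r^t(\xi)=\zeta$ and $r^{t-1}(\xi)=\zeta^{2}$, etc. The hypothesis on $Y$ says precisely that $\ker r$ on $H_*QY$ is "coordinate-wise" with respect to the monomial basis — a sum $Q^Ix+Q^Jy$ with $x\neq y$ lies in $\ker r$ iff each summand does. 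Via the stable splitting/suspension this transports to control of $\ker r$ on $H_*QX=H_*Q\Sigma Y$ on the relevant range. The point is that if $t\geqslant 2$ then $\zeta=r^{t-1}(\xi)/\!/\cdots$ forces the leading monomial $Q^Ix$ of $\zeta$ to itself be a $2$nd power, i.e. $Q^Ix\in\operatorname{im} r$ with a representative whose entries are again of the form $2\cdot(\text{something})$, hence \emph{even}. But we just argued the leading entry $i_1$ (indeed all entries) of $Q^Ix$ must be odd once $\zeta$ is $A$-annihilated primitive and $X$ is a suspension — a contradiction. This gives $t\leqslant 1$, which is part $(1)$.

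For part $(2)$, assume $\xi=\zeta^{2}$ is spherical with $\sigma_*\zeta\neq 0$ and suppose for contradiction that $\zeta$ is even dimensional. Then $\zeta$ is an $A$-annihilated primitive even dimensional class, so by Theorem \ref{l1}$(1)$, $\zeta=\sum Q^Ix$ modulo decomposables with each $I$ having only odd entries. Now $\zeta^2$ has leading term $(Q^Ix)^2$; using the Dyer–Lashof relation $(Q^Ix)^2=Q^{I}Q^{|x|+\sum I}x$-type expansion (or more simply that squaring a generator $Q^Ix$ of dimension $d$ produces $Q^d(Q^Ix)=Q^{d,I}x$, which has bottom operation $d=|Q^Ix|$), the new bottom entry of the leading term of $\xi=\zeta^2$ equals $|Q^Ix|$. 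Since $|Q^Ix|$ is even (as $\zeta$ is even dimensional and $Q^Ix$ is a summand of $\zeta$ modulo decomposables, hence of that dimension), the leading monomial of $\xi$ has an even entry. But $\xi$ is itself an $A$-annihilated primitive class (being spherical) with $\sigma_*\xi\neq 0$ coming from $\sigma_*\zeta\neq 0$, so applying Theorem \ref{t2} and then the first Theorem to the stabilized leading term of $\xi$ shows that leading term cannot have an even entry — contradiction. Hence $\zeta$ is odd dimensional.

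The main obstacle I expect is bookkeeping the passage from "$\zeta^{2^t}$ is primitive" to "$\zeta$ is primitive" and then controlling how squaring acts on the \emph{leading total-order term}: one must verify that squaring a monomial generator $Q^Ix$ does not create cancellation with lower-order squared terms and that the total order of $(Q^Ix)^2$ is still achieved by a single well-understood monomial, so that Theorem \ref{t2} and the first Theorem can be applied to it. The hypothesis on $\ker r$ for $Y$ is exactly what rules out the pathological cancellation (the analogue of the $\eta\vee 3\eta$ example), so the real work is checking that the suspension isomorphism $\widetilde H_*Y\simeq\widetilde H_{*+1}\Sigma Y$ propagates this $\ker r$-hypothesis to the statement about $H_*Q\Sigma Y$ that the argument uses, and interlacing this with the total-order reduction of Theorem \ref{t2}.
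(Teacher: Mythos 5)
There is a genuine gap, and it comes from trying to run the whole argument upstairs in $H_*Q\Sigma Y$, where every tool you invoke is vacuous on the class at hand. The class $\xi=\zeta^{2^t}$ ($t\geqslant 1$) is a square, hence decomposable; therefore $\sigma_*\xi=0$ (the homology suspension kills decomposables), so in part $(2)$ your assertion that $\sigma_*\xi\neq 0$ ``coming from $\sigma_*\zeta\neq 0$'' is false and Theorem 2 cannot be applied to $\xi$. Worse, Theorems 1--3 only constrain the indecomposable part of a class modulo decomposables, and for a square that part is zero; so observing that the ``leading monomial'' of $\zeta^2$ acquires an even bottom entry contradicts nothing. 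The same problem undoes part $(1)$: from $r^{t-1}\xi=\zeta^2$ and $r^t\xi=\zeta$ no constraint follows, the Milnor--Moore fact that the indecomposable part of a primitive lies in $\ker r$ is vacuous for the decomposable class $\xi$, and the step ``if $t\geqslant 2$ then the leading monomial $Q^Ix$ of $\zeta$ must itself be a second power with even entries'' is simply unjustified -- nothing forces $\zeta$ (as opposed to $\xi$) to be a square. Note also that your argument uses sphericality only through ``primitive and $A$-annihilated,'' which cannot be enough in your set-up, since all the constraints you impose on $\xi$ evaporate on decomposables.

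The paper's proof hinges on exactly the step you omit: desuspension. Since $\xi$ is spherical and $X=\Sigma Y$, the underlying homotopy class is adjoint to a map into $QY$, so $\xi$ pulls back to a spherical class $\xi_0\in H_*Q_0Y$ with $\sigma_*\xi_0=\xi$. Writing $\zeta=\sum Q^K\Sigma y$ modulo squares (using $\sigma_*\zeta\neq0$), one gets, for $t=2$ say, $\xi_0=\sum Q^{2d}Q^dQ^K y$ modulo decomposables with $d=\dim\zeta$; after desuspension the operations $Q^{2d},Q^d$ are no longer squaring operations (the excess becomes positive and $\xi_0$ is odd dimensional), so $\xi_0$ has a nontrivial indecomposable part. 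Primitivity of $\xi_0$ together with Milnor--Moore puts this indecomposable part in $\ker r$, and the hypothesis on $\ker r$ is what allows a termwise analysis; finally, since $\dim\bigl(\sum P_{K,y}\bigr)=2d-1$ where $P_{K,y}$ is the primitive attached to $Q^dQ^Ky$, the Nishida relation $Sq^1_*Q^{2d}=Q^{2d-1}$ gives $Sq^1_*\xi_0=\bigl(\sum P_{K,y}\bigr)^2\neq0$, contradicting the $A$-annihilation of the spherical class $\xi_0$; part $(2)$ is the analogous computation for $\xi=\zeta^2$ with $\zeta$ even dimensional. To repair your proposal you would need to incorporate this pullback to $QY$ (where sphericality, not just primitivity, is genuinely used) rather than argue with $r$, the total order and Theorems 1--3 on the decomposable class $\xi$ itself.
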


Notice that the condition on $r$ is weaker than being monic, every class $Q^Ix$ with $I$ having at least one odd entry belongs to $\ker r$. In particular, the condition on $r$ is satisfied if $r_Y:H_*Y\to H_*Y$ is monic.\\ \\
Finally, we like to elaborate how these results might be applied. Suppose $X$ is path connected. First, recall that the height filtration $\mathrm{ht}:H_*QX\to\mathbb{N}$ is defined by $\mathrm{ht}(Q^i\xi)=2\mathrm{ht}(\xi)$, $\mathrm{ht}(\xi\xi')=\mathrm{ht}(\xi)+\mathrm{ht}(\xi')$, $\mathrm{ht}(x)=1$ for $x\in\widetilde{H}_*X$, $\xi,\xi'\in H_*QX$ \cite[Part I]{1}; in particular, $\mathrm{ht}(Q^Ix)=2^{l(I)}$. The height filtration is compatible with the Snaith splitting \cite{14} of $QX$, see also \cite{-1}, given by $\Sigma^\infty QX\simeq\vee_{r=1}^\infty D_rX$. The stability of the Steenrod operations implies that a class $\xi\in H_*QX$ is $A$-annihilated if sum of its terms of height $r$ is $A$-annihilated for all $r>0$.\\ Assume $\xi\in H_*QX$ is an $A$-annihilated primitive class, even dimensional for a moment, with $\sigma_*\xi\neq 0$. We may write $\xi=\sum Q^Ix+D$ where $D$ denotes the sum of decomposable terms, and $I$ has only odd entries. If $\xi=\sum_r(\sum_{\mathrm{ht}=r} Q^Ix+D_r)$ then each sum $\sum_{\mathrm{ht}=r} Q^Ix+D_r$ has to satisfy Theorem 2. For instance, if $\xi\in H_*QP$ is an $A$-annihilated primitive class with $\xi=\sum Q^Ix+D$ then
\begin{equation}\label{xi}
\xi=\sum_{l(I)\leqslant s}Q^Ia_{2^{t(I)}-1}+O
\end{equation}
where the first sum denotes the sum of terms of maximal total order of different lengths, $t(I)>0$, and $O$ denotes the sum of terms of lower total order and decomposable terms. Now, given that $\xi_0\in H_*Q_0S^0$ is a spherical class, this implies that it pulls back to a spherical class $\xi\in H_*QP$ which is of the form (\ref{xi}). The proof of Curtis conjecture then reduces to eliminating such classes from being spherical with $l(I)>0$. We hope that investigating these cases for the cases $X=\Sigma\bC P_+$ and some other spaces will also help in providing the Curtis conjecture. We leave this to a future work.\\
\tb{Acknowledgement.} This work has been done while I have been a long term visitor at the School of Mathematics/University of Manchester. I like to thank many individual for their support. Some of the results here are from my PhD thesis under supervision of Peter Eccles at Manchester. I am grateful to him for many discussions on the topic of this paper and for his encouragements during my PhD and the visiting time at Manchester. I  thank my family for their support.

\section{Proof of Theorem \ref{t2}}
\subsection{Kudo-Araki operations}
The Kudo-Araki operations $Q^i$ are group homomorphisms defined on homology of any infinite loop space $X$ as
$$Q^i:H_*X\to H_{*+i}X.$$
If $x\in H_*X$ is given with $\dim x=d$ then $Q^dx=x^2$ and $Q^ix=0$ for $i<d$ where the square is taken with respect to the Pontrjagin product in $H_*X$ given by the loop space structure on $X$. These operation also  satisfy various forms of Cartan formula \cite{1}. The iterated operation $Q^I$ also is defined by composition, i.e. $Q^{(I,J)}x=Q^IQ^Jx$ where $(I,J)=(i_1,\ldots,i_s,j_1,\ldots,j_t)$ for $I=(i_1,\ldots,i_s)$, $J=(j_1,\ldots,j_t)$. The equation (\ref{eq1}) completely determines the action of these operations on $H_*QX$ when $X$ is path-connected.\\
The Kudo-Araki operations with lower indexing $Q_i$ are defined by $Q_ix=Q^{i+d}x$ where $d=\dim x$, e.g. $Q_0x=x^2$. Using the notation of Theorem $2$ we may write $Q^Ix=Q_Ex$ where $E=(e_1,\ldots,e_s)$ with $e_j=\ex(Q^{I_j}x)$. The sequence $I$ is admissible if and only if $E$ is an increasing sequence; $I$ can not have even entries if and only if $E$ is strictly increasing.\\
We define the \textbf{total order} on terms $Q^Ix\in H_*QX$ as following. For $E$ and $E'$ sequences of nondecreasing integers, of the same length say $s$, we define $E>E'$ either if $e_1>e'_1$, or if $e_j=e'_j$ for all $j<t$ with some $t\leqslant s$ and $e_t>e'_t$, which is the same as the lexicographic ordering. We then rearrange terms of the form $Q^Ix$ by $Q^Ix=Q_Ex>Q_{E'}y=Q^Jy$ if $E>E'$. We call this the `\textbf{total order}'. We note it is possible to define the total order on classes $Q^Ix_i\in H_*Q_0S^0$ in a similar way.

The homology suspension homomorphism $\sigma_*:H_*QX\to H_{*+1}Q\Sigma X$ kills decomposable terms while $\sigma_*Q^Ix=Q^I\Sigma x$. Notice that if $\ex(Q^Ix)=0$ then it is a square and hence a decomposable term. This completely determines the action of $\sigma_*$. In terms of lower indexed operations, if $Q^Ix=Q_Ex$ then
$$\sigma_*Q_Ex=Q_{E-\mathbf{1}}\Sigma x$$
where for $n>0$ we have $\mathbf{n}=(n,n,\ldots,n)$ and $E-\mathbf{n}$ is defined componentwise. It is obvious that the (iterated) homology suspension respects the total order.
\subsection{Proof of Theorem \ref{t2}}
Suppose $\xi\in H_*QX$ is $A$-annihilated with $\sigma_*\xi\neq 0$. We may write $\xi=\sum Q^Ix+D$ where $\ex(Q^Ix)>0$ and $D$ denotes the sum of decomposable terms. Assume that $X$ has only one cell in each dimension, without loss of generality one can take $X=P$. It is then immediate that there is a unique term of maximal total order, that is we may write
$$\xi=Q^Ix+O$$
where $Q^Ix=Q_Ex$ is the term of maximal total order, with $E=(e_1,\ldots,e_s)$, and $O$ denotes sum of terms of lower order and decomposable terms.\\
\textit{Case} 0: Suppose $Q^Ix$ is the only term with $\ex(Q^Ix)=e_1$ and other terms are of lower excess.\\
If $e_1>1$ then $\sigma_*^{e_1-1}\xi=Q^I\Sigma^{e_1-1}x$ which is an $A$-annihilated class. In this case, applying Theorem 1 to this class completes the proof.\\
If $e_1=1$ then all other terms in the expression for $\xi$ are decomposable. In this case, $\sigma_*\xi=(Q^{I_j}\Sigma x)^{2^t}$ for some $j\leqslant s$ and some $t$. The class $(Q^{I_j}\Sigma x)^{2^t}$ is an $A$-annihilated class which implies that $Q^{I_j}\Sigma x$ is $A$-annihilated and the proof is complete.\\
Next, assume $Q^Ix$ is not the only term of maximum excess. Let $Q^Jy=Q_{E'}y$ be the next term in the expression for $\xi$ with respect to the total order. By definition of the total order, since $E>E'$ with $e_1=e_1'$ then there exists $t\leqslant s$ such that $e_j=e_j'$ for all $j<t$ and $e_t>e_t'$. Let us divide the proof to the following cases:\\
\textit{Case} 1-1: $e_1>1$\\
If $\xi$ even dimensional then Corollary \ref{c1} and Lemma \ref{55} imply that in all terms $Q^I x$ of positive excess, $I$ can have only odd entries. If $\xi$ is odd dimensional then for similar reason in all terms $Q^I\Sigma x$ in $\sigma_*\xi$ which are of positive excess, $I$ will have only odd entries. In particular $Q^Ix$ and $Q^Jy$ must have this property. Consequently, all sequences $E$ in the expression for $\sigma_*\xi$ with lower indexed operations should be strictly increasing. This insures that $\sigma_*^{e_j}Q^{I_j}x=(Q^{I_{j+1}}\Sigma^{e_j}x)^2$ with $\ex(Q^{I_{j+1}}\Sigma^{e_j}x)>0$, i.e. $Q^{I_{j+1}}\Sigma^{e_j}x\neq 0$.\\
Assume $s=2$. In this case
$$\sigma_*^{e_1}\xi=(Q^{I_2}\Sigma^{e_1}x+Q^{J_2}\Sigma^{e_1}y+O_2)^2.$$
Reset $\xi:=Q^{I_2}\Sigma^{e_1}x+Q^{J_2}\Sigma^{e_2}y+O_2$ whose leading term with respect to the total order is $Q^{I_2}\Sigma^{e_1}x$ and is a term of maximum of excess. Now applying \textit{Case }0 to this class completes the proof. For $s>2$ iterated application of this argument completes the proof.\\
\textit{Case} 1-2: $e_1=1$\\
In this case, $\sigma_*\xi=(Q^{I_j}\Sigma x+Q^{J_j}\Sigma y+O_j)^{2^t}$ is annihilated class where $j\leqslant s$ and $t>0$. This implies that $Q^{I_j}\Sigma x+Q^{J_j}\Sigma y+O_j$ is $A$-annihilated. Reset $\xi:=Q^{I_j}\Sigma x+Q^{J_j}\Sigma y+O_j$ and start from \textit{Case 0}.\\
The above algorithm depends on the length of $I$. Hence, it will terminate in finitely many steps. This completes the proof of Theorem 2.\\ \\
Notice that according to the Nishida relations, to be explained in next session, the action of the Steenrod operation respects the length. Hence, it is enough to focus on terms of fixed length.

\section{The Nishida and Adem relations}
The action of the Steenrod algebra, $A$, on $H_*QX$ is described by its actions on the monomial generators $Q^Ix$ which is provided by the Nishida relation \cite[Part I, Theorem 1.1(9)]{1}
\begin{equation}
Sq^a_*Q^b=\sum_{t\geqslant 0}{b-a\choose a-2t}Q^{b-a+r}Sq^t_*.
\end{equation}
The iterated application of this relation allows us to calculate $Sq^a_*Q^I$ when $l(I)>1$. Notice that \textit{the Nishida relations respect the length}, i.e. if
\begin{equation}\label{n1}
Sq^a_*Q^I=\sum Q^KSq^{a^K}_*
\end{equation}
then $l(I)=l(K)$. According to Madsen \cite[Equation 3.2]{9} the Nishida relations define an action $N:A\otimes R\to R$ as following
\begin{eqnarray}
N(Sq^a_*,Q^b)&=&{b-a\choose a}Q^{b-a},\\
N(Sq^a_*,Q^I)& = & \sum{i_1-a\choose a-2t}Q^{i_1-a+t}N(Sq^t_*,Q^{I_2}).
\end{eqnarray}
In other words if $Sq^a_*Q^I=\sum Q^KSq^{a^K}_*$ where $a^K\in\bZ$ then
$$N(Sq^a_*,Q^I)=\sum_{a^K=0}Q^K.$$
We are concerned about vanishing or non-vanishing of these relations which usually can be determined by looking at the binomial coefficients (mod $2$) in a given relation. Recall that if $n=\sum n_i2^i$ and $m=\sum m_i2^i$ are given with $n_i,m_i\in\{0,1\}$ then ${n\choose m}=1$ mod $2$ if and only if $n_i\geqslant m_i$ for all $i$. For instance, the equality ${\mathrm{even}\choose\mathrm{odd}}=0$ implies that $Sq^{2a+1}_*Q^{2b+1}=0$ and more generally $Sq^{2a+1}_*Q^I=0$ if $I=(2b+1,i_2,\ldots,i_s)$.\\
However, it is not always very straightforward to decide about triviality or non-triviality of $Sq^a_*Q^I$ as in the Nishida relations \ref{n1} it is not clear that the sequences $K$ are admissible. We need to rewrite \ref{n1} as a sum of terms where $K$ is admissible and then decide about vanishing of $Sq^a_*Q^I$.\\
If $Q^aQ^b$ is non admissible, i.e. $a>2b$, then the \textit{Adem relation} is given by
\begin{equation}
Q^aQ^b=\sum_{a+b\leqslant 3t}{t-b-1\choose 2t-a}Q^{a+b-t}Q^t
\end{equation}
allows us to rewrite $Q^aQ^b$ as a sum of admissible terms.

\begin{exm}\label{exm}
Consider $Q^9Q^5g_1$ which is an admissible term. One has
$$Sq^4_*Q^9Q^5g_1=Q^7Q^3g_1$$
where $Q^7Q^3$ is not admissible. In fact the Adem relation $Q^7Q^3=0$ implies $Q^7Q^3g_1=0$. Indeed the class $Q^9Q^5g_1$ is not $A_*$-annihilated which can be seen by applying $Sq^2_*$ as we have
$$Sq^2_*Q^9Q^5g_1=Q^7Q^5g_1\neq 0.$$
Notice that the right hand side of the above equation is an admissible term.
\end{exm}

We need to choose the suitable operation $Sq^a_*$ such that the outcome is admissible and there is no need to use the Adem relations for the Kudo-Araki operation. The reason is that it is practically impossible to use the Adem relations when $l(I)$ is big. The following lemma of Curtis \cite[Lemma 6.2]{3} tells us when it is possible to choose the right operation and provides us with the main tool towards the proof of Theorem 1.

\begin{lmm}\label{curtis}
Suppose $I=(i_1,\ldots,i_s)$ is an admissible sequence such that $2i_{j+1}-i_j< 2^{\rho(i_{j+1})}$ for all $1\leqslant j\leqslant s-1$. Let
$$N(Sq^a_*,Q^I)=\sum_{K\textrm{ admissible}} Q^K.$$
Then
$$\ex(K)\leqslant \ex(I)-2^{\rho(i_1)}.$$
Moreover, $\rho(i_1)\leqslant\rho(i_2)\leqslant\cdots\leqslant\rho(i_s)$.
\end{lmm}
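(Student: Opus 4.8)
The plan is to induct on the length $s=l(I)$, stripping off the outermost Kudo--Araki operation $Q^{i_1}$ via the Nishida relations, and to prove the monotonicity $\rho(i_1)\leqslant\cdots\leqslant\rho(i_s)$ separately by a direct binary computation. I would use the mod~$2$ criteria of Lucas and Kummer for binomial coefficients throughout. Since both the Nishida relations and the Adem relations preserve length and lower total degree by the amount applied, every admissible $Q^K$ occurring in $N(Sq^a_*,Q^I)$ has $l(K)=s$ and $|K|=|I|-a$, hence $\ex(K)=2k_1-|K|=2k_1-|I|+a$; so the statement is equivalent to the leading-index bound $k_1\leqslant i_1-\tfrac12\bigl(a+2^{\rho(i_1)}\bigr)$, and I would work with that form.

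For the monotonicity, set $r=\rho(i_{j+1})$; then $i_{j+1}$ has digit $1$ in positions $0,\ldots,r-1$ and digit $0$ in position $r$, so $2i_{j+1}$ has digit $0$ in positions $0$ and $r+1$, and subtracting $d=2i_{j+1}-i_j$ (with $0\leqslant d<2^r$) only affects positions $0,\ldots,r$, where the result equals $2^{r+1}-2-d\in[0,2^{r+1}-2]$, a number that is never all $1$'s in positions $0,\ldots,r$; hence $i_j$ has a zero digit among positions $0,\ldots,r$, i.e.\ $\rho(i_j)\leqslant r$. The base case $s=1$ is short: $N(Sq^a_*,Q^{i_1})=\binom{i_1-a}{a}Q^{i_1-a}$, so the only admissible $K$ is $(i_1-a)$, and if $a>0$ with $\binom{i_1-a}{a}\equiv1$ then, taking $j$ to be the lowest set digit of $a$, the carry produced in position $j$ of the sum $(i_1-a)+a=i_1$ forces digit $j$ of $i_1$ to vanish, so $\rho(i_1)\leqslant j$ and $a\geqslant 2^{j}\geqslant 2^{\rho(i_1)}$, giving $k_1=i_1-a\leqslant i_1-2^{\rho(i_1)}$.

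For the inductive step ($s>1$) I would write $N(Sq^a_*,Q^I)=\sum_{t\geqslant0}\binom{i_1-a}{a-2t}\,Q^{i_1-a+t}\,N(Sq^t_*,Q^{I_2})$, apply the inductive hypothesis to each $N(Sq^t_*,Q^{I_2})$ (the sequence $I_2$ inherits all the hypotheses), expressing it as a sum of admissible $Q^L$ with $\ex(L)\leqslant\ex(I_2)-2^{\rho(i_2)}$ for $t\geqslant1$ and as $Q^{I_2}$ for $t=0$, and then rewrite each $Q^{i_1-a+t}Q^L$ admissibly via the Adem relations, using that an Adem relation applied to an inadmissible $Q^cQ^{l_1}$ strictly lowers the leading index and that $Q^{2n+1}Q^n=0$. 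With $c=i_1-a+t$: terms with $a-2t<0$ have zero coefficient; terms with $a-2t\geqslant 2^{\rho(i_1)}$ have formal excess $\ex(c,L)=c-|L|=\ex(I)-(a-2t)\leqslant\ex(I)-2^{\rho(i_1)}$, and since Adem reductions only decrease excess they cause no trouble. The remaining range $0\leqslant a-2t<2^{\rho(i_1)}$ (where necessarily $t\geqslant1$, the base-case estimate excluding $t=0$) is where the hypotheses are really used: combining $\ex(L)\leqslant\ex(I_2)-2^{\rho(i_2)}$, i.e.\ $2l_1+t\leqslant 2i_2-2^{\rho(i_2)}$, with the gap condition $2i_2-i_1<2^{\rho(i_2)}$, one gets $c>2l_1$ (so $Q^cQ^{l_1}$ is inadmissible) and, after one Adem step, $k_1\leqslant\lfloor c/2\rfloor+l_1\leqslant c-1$, a bound preserved under further reductions. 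When $\rho(i_1)\leqslant1$ this range contains only $a-2t=0$ (the value $a-2t=1$ having zero coefficient), and then $c-1$ is exactly the required bound $i_1-\tfrac12(a+2^{\rho(i_1)})$.

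I expect the main obstacle to be the case $\rho(i_1)\geqslant2$ (equivalently $i_1\equiv3\pmod4$) with $0\leqslant a-2t<2^{\rho(i_1)}$: here a single Adem step no longer suffices, and the inductive-hypothesis-plus-gap argument may not even force $Q^cQ^{l_1}$ to be inadmissible, so one has to show, purely from the binary structure imposed by $\binom{i_1-a}{a-2t}\equiv1$ together with the lower Nishida coefficients and \emph{all} the gap hypotheses $2i_{j+1}-i_j<2^{\rho(i_{j+1})}$, both that the admissible monomials of excess exceeding $\ex(I)-2^{\rho(i_1)}$ never actually occur and that, when $Q^cQ^L$ is inadmissible, the full Adem cascade (using $Q^{2n+1}Q^n=0$ and the vanishing of intermediate coefficients at the lower indices) brings the leading index down to at most $i_1-\lceil\tfrac12(a+2^{\rho(i_1)})\rceil$. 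Carrying out this combinatorial bookkeeping is the technical heart of the lemma; the rest is routine once the reformulation above is in place.
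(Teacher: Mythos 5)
You should first note that the paper itself does not prove this lemma: it is quoted from Curtis \cite[Lemma 6.2]{3}, with Wellington \cite[Theorems 7.11, 7.12, Lemma 12.5]{15} cited as an alternative source, so the only comparison possible is with your argument on its own merits. There, the parts you actually carry out are fine: the binary argument for $\rho(i_1)\leqslant\cdots\leqslant\rho(i_s)$ is correct, the base case $s=1$ (nonvanishing of ${i_1-a\choose a}$ forces $a\geqslant 2^{\rho(i_1)}$) is correct, and the reformulation of the excess bound as $k_1\leqslant i_1-\tfrac12(a+2^{\rho(i_1)})$ is legitimate. But the inductive step is only closed in two regimes: $a-2t\geqslant 2^{\rho(i_1)}$ (a pure degree count) and $a-2t=0$ (where one Adem step suffices, and which is the whole story only when $\rho(i_1)\leqslant 1$). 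In the remaining range $0<a-2t<2^{\rho(i_1)}$, nonempty as soon as $\rho(i_1)\geqslant 2$, your inputs give nothing: from $2l_1\leqslant 2i_2-2^{\rho(i_2)}-t$ and $2i_2-2^{\rho(i_2)}\leqslant i_1-1$ one only gets $c-2l_1\geqslant 1-(a-2t)$ for $c=i_1-a+t$, so for $a-2t\geqslant 1$ you cannot even conclude that $Q^cQ^{l_1}$ is inadmissible, while its formal excess $\ex(I)-(a-2t)$ strictly exceeds the target $\ex(I)-2^{\rho(i_1)}$. You flag this case yourself as the ``technical heart,'' but that heart is precisely the lemma: everything you have closed is routine, and the case you defer is where the hypothesis $2i_{j+1}-i_j<2^{\rho(i_{j+1})}$ actually has to interact with the mod $2$ binomial coefficients.

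The underlying reason the step fails is that your inductive hypothesis -- the excess inequality $\ex(L)\leqslant\ex(I_2)-2^{\rho(i_2)}$ alone -- has discarded the digit-level information needed to show that the offending terms either have vanishing coefficient ${i_1-a\choose a-2t}$, cancel, or are driven down far enough by the full Adem cascade. Curtis's and Wellington's treatments strengthen the induction: they track finer data about the sequences produced (in effect, that the output again satisfies a $\rho$-monotone/completely admissible condition with controlled binary form), and it is this stronger statement, not the excess bound by itself, that propagates. Until that bookkeeping is supplied -- or the statement is simply reduced to the cited results, as the paper does -- the proposal is an outline of a plausible strategy rather than a proof.
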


The above lemma can also be obtained by combining observations of Wellington \cite[Theorem 7.11]{15}, \cite[Theorem 7.12]{15} and \cite[Lemma 12.5]{15}.

We conclude this section by some further remarks.
\begin{rmk}\label{relations}
We make some simple observations about the Adem and Nishida relations.\\
$(0)$ Applying the Adem relations and Nishida relations reduces the excess.\\
$(1)$ The Nishida relation $Sq^\mathrm{odd}_*Q^\mathrm{odd}=0$ implies that if we have a sequence $I$ only with odd entries, then after iterated application of the Nishida relations, and before applying the Adem relations to non-admissible terms, at the expression
$$Sq^{2k}_*Q^I=\sum Q^KSq^{a^K}_*$$
the sequence $K$ will only have odd entries.\\
$(2)$ The binomial coefficient in the Adem relation tell us that for any $a,b\geqslant 0$ we have
\begin{equation}
Q^{2a+1}Q^{2b+1}=\sum \epsilon_t Q^{\mathrm{odd}}Q^{\mathrm{odd}}
\end{equation}
with $\epsilon_t\in\{0,1\}$. This fact together with (1) implies that if $I$ has only odd entries, applying the Adem relations to the non-admissible terms we may write
$$Sq^{2k}_*Q^I=\sum Q^{L_K}Sq^{a^K}_*$$
with $L_K$ admissible only having odd entries.\\
$(3)$ The binomial coefficients in the Adem relation yield
\begin{equation}
Q^{2a+1}Q^{2b}=\sum \epsilon_t Q^{\mathrm{odd}}Q^{\mathrm{even}}.
\end{equation}
This implies that if $i$ is an even number, and $I$ is a sequence of odd numbers then
$$Sq^{2^{r-1}}_*Q^{(I,i)}=\sum Q^LSq^{a^L}_*$$
with $L=(l_1,\ldots,l_s,l_{s+1})$ admissible where $l_1,\ldots,l_s$ are odd and $l_{s+1}$ is even.\\
$(4)$ The binary coefficient of $Q^{a+b-t}Q^t$ in the Adem relation for a non-admissible term $Q^aQ^b$ can be nontrivial if $t>b$. Writing in the lower indexed operations, this can be used to show that applying the Adem relation to non-admissible pairs $Q^aQ^b$ reduces the total order, that is if $Q^aQ^b$ is not admissible then $Q^aQ^bx>Q^{a+b-t}Q^tx$ for all $t>b$.
\end{rmk}

\section{Proof of Theorem $1$}
We break the proof into little lemmata.

\begin{lmm}
Let $x\in H_*X$ be $A$-annihilated, and $I$ an admissible sequence such that $(1)$ $0<\ex(Q^Ix)<2^{\rho(i_1)}$; $(2)$ $2i_{j+1}-i_j< 2^{\rho(i_{j+1})}$ for all $1\leqslant j\leqslant s-1$. Then $Q^Ix$ is $A$-annihilated.
\end{lmm}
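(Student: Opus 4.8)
The plan is to apply $Sq^a_*$ for an arbitrary $a>0$ directly to $Q^Ix$, push the result into admissible form, and check that every surviving term vanishes. First I would use the Nishida relations iteratively to write
$$Sq^a_*Q^Ix=\sum Q^KSq^{a^K}_*x,$$
where the sequences $K$ need not be admissible, and then apply the Adem relations to the non-admissible $Q^K$. Since the Adem relations rewrite only strings of Kudo--Araki operations and leave each attached $Sq^{a^K}_*$ untouched, the reduction to admissible form does not alter the integers $a^K$, so after full reduction we obtain $Sq^a_*Q^Ix=\sum_{K\textrm{ admissible}}Q^KSq^{a^K}_*x$; moreover $l(K)=l(I)=s$ because both families of relations respect the length (as noted after the Nishida relations, and in Remark~\ref{relations}).

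Next I would split this sum according to the value of $a^K$. For the terms with $a^K>0$, the hypothesis that $x$ is $A$-annihilated gives $Sq^{a^K}_*x=0$, so these terms disappear. The terms with $a^K=0$ assemble, in admissible form, into $N(Sq^a_*,Q^I)\cdot x$. Here I would invoke Curtis' Lemma~\ref{curtis}: condition $(2)$ is precisely its hypothesis $2i_{j+1}-i_j<2^{\rho(i_{j+1})}$, so every admissible $K$ occurring in $N(Sq^a_*,Q^I)$ satisfies $\ex(K)\leqslant\ex(I)-2^{\rho(i_1)}$. Since $l(K)=l(I)$, the contribution of $\dim x$ to the excess is the same for $K$ and $I$, so $\ex(Q^Kx)\leqslant\ex(Q^Ix)-2^{\rho(i_1)}$, and then condition $(1)$, namely $\ex(Q^Ix)<2^{\rho(i_1)}$, forces $\ex(Q^Kx)<0$. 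A monomial $Q^Kx$ of negative excess is zero (the bottom operation applied to a class lands strictly below that class's dimension, so the whole string is killed). Hence all the $a^K=0$ terms vanish as well, and $Sq^a_*Q^Ix=0$ for every $a>0$, i.e.\ $Q^Ix$ is $A$-annihilated.

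This is essentially a single application of Curtis' Lemma, so I do not expect a serious obstacle; the one point needing care is the bookkeeping in the first step --- verifying that the passage to the admissible basis via the Adem relations does not mix the terms carrying $Sq^0_*$ with those carrying $Sq^{a^K}_*$ for $a^K>0$ (it cannot, since the Adem relations only act on the Dyer--Lashof part), and that length preservation is what lets the excess bound of Lemma~\ref{curtis} transfer verbatim from $Q^K$ to $Q^Kx$. One can also record, via Remark~\ref{relations}$(0)$, that the admissible rewriting never raises the excess, which is consistent with --- though weaker than --- the bound extracted from Lemma~\ref{curtis}.
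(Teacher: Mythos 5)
Your proposal is correct and follows the paper's own argument: kill the terms with $a^K>0$ using the $A$-annihilatedness of $x$, and kill the $a^K=0$ terms by applying Curtis' Lemma \ref{curtis} to get $\ex(Q^Kx)\leqslant\ex(Q^Ix)-2^{\rho(i_1)}<0$. The extra bookkeeping about the Adem relations and length preservation is fine but does not change the substance.
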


\begin{proof}
Let $a>0$. Since $x$ is $A$-annihilated, we have
$$\begin{array}{llllll}
Sq^a_*Q^Ix & = & \sum Q^KSq^{a^K}_*x & = & \sum_{a^K=0} Q^Kx
           \end{array}$$
where $K$ is admissible. But notice that according to Lemma 2.9
$$\ex(Q^Kx)\leqslant\ex(Q^Ix)-2^{\rho(i_1)}<0.$$
Hence the above sum is trivial, and we are done.
\end{proof}

This proves the Theorem $1$ in one direction. In order to prove the other direction,  we show if any of the conditions (1)-(3) of Theorem $1$ does not hold then $Q^Ix$ will be not-$A$-annihilated.

\begin{rmk}\label{ro}
Looking at the binary expansions it is easy to see that given a positive integer $n$, then $\rho(n)$ is the least integer $t$ such that
$${n-2^t\choose 2^t}\equiv 1 \textrm{ mod }2.$$
\end{rmk}

\begin{lmm}
Let $X$ be path connected. Suppose $I=(i_1,\ldots,i_s)$ is an admissible sequence, and let $Q^Ix$ be given with $\ex(Q^Ix)>0$ with $j$ being the least positive integer such that $2i_{j+1}-i_j\geqslant 2^{\rho(i_{j+1})}$. Then such a class is not $A$-annihilated, and we have
$$Sq^{2^{\rho(i_{j+1})+j}}_*Q^Ix=Q^{i_1-2^{\rho+j-1}}Q^{i_2-2^{\rho+j-2}}\cdots Q^{i_j-2^\rho}Q^{i_{j+1}-2^\rho}Q^{i_{j+2}}\cdots Q^{i_s}x$$
modulo terms of lower excess and total order.
\end{lmm}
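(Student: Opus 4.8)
The plan is to apply the Nishida relations iteratively to $Q^Ix$ with a carefully chosen operation $Sq^a_*$, pushing the "defect" at position $j$ through the sequence, and then to show that the resulting admissible term is nonzero. The choice $a = 2^{\rho(i_{j+1})+j}$ is dictated by the need to strip off a factor of $2^{\rho(i_{j+1})}$ at each of the first $j+1$ stages of the Nishida recursion while keeping the binomial coefficients odd; the extra powers of $2$ (the $2^{\rho+j-1}, 2^{\rho+j-2}, \ldots$ pattern in the statement) are exactly what is produced by feeding $Sq^{2^{\rho+j-1}}_*$ into $Q^{i_1}$, then $Sq^{2^{\rho+j-2}}_*$ into $Q^{i_2}$, and so on down to $Sq^{2^\rho}_*$ hitting both $Q^{i_j}$ and $Q^{i_{j+1}}$.

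\textbf{Key steps.} First I would unwind the recursion for $N(Sq^a_*, Q^I)$ given by Madsen's formulas: at the first stage one has $N(Sq^a_*,Q^I) = \sum \binom{i_1-a}{a-2t} Q^{i_1-a+t} N(Sq^t_*, Q^{I_2})$, and I would argue that the term $t = a/2 = 2^{\rho+j-1}$ survives, i.e. $\binom{i_1 - 2^{\rho+j-1}\cdot 2}{0}$-type coefficients are odd — here one uses the minimality of $j$ (so positions $1,\ldots,j-1$ satisfy $2i_{k+1}-i_k < 2^{\rho(i_{k+1})}$) together with Lemma~\ref{curtis} and Remark~\ref{ro} to see that $\rho(i_1)\leqslant\cdots\leqslant\rho(i_{j+1})$ and that subtracting $2^{\rho+j-1}$ from $i_1$ "uses up" a genuine bit. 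Iterating $j$ times brings us to a term $Q^{i_1 - 2^{\rho+j-1}} \cdots Q^{i_j - 2^\rho} N(Sq^{2^\rho}_*, Q^{(i_{j+1},\ldots,i_s)})$, and at this last stage the hypothesis $2i_{j+1} - i_j \geqslant 2^{\rho(i_{j+1})}$ — equivalently, that the relevant binomial coefficient $\binom{i_{j+1} - 2^\rho}{2^\rho}$ is odd by Remark~\ref{ro} — guarantees $Q^{i_{j+1}-2^\rho}$ appears with $Sq^0_*$, killing the tail $Q^{i_{j+2}}\cdots Q^{i_s}$ unchanged (since $x$ need not be $A$-annihilated here, but the lower $Sq^t_*$ that would act on the tail contribute only to the "lower excess" error terms, which I collect into the $O$-notation). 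Second, I would verify admissibility of the displayed monomial: $2(i_{k+1} - 2^{\rho+j-1-k}) \geqslant i_k - 2^{\rho+j-k}$ reduces to $2i_{k+1} - i_k \geqslant 0$, which holds since $I$ is admissible of positive excess; at the junction $k=j$ one needs $2(i_{j+1}-2^\rho) \geqslant i_j - 2^\rho$, i.e. $2i_{j+1} - i_j \geqslant 2^\rho$, which is precisely our hypothesis. Third, I would show the displayed term is nonzero and of top total order among the surviving terms: nonvanishing follows because each new exponent still exceeds (or the excess condition forces it past) the dimension of what it acts on — one tracks excess via $\ex(Q^Kx) = \ex(Q^Ix) - a + (\text{bits recovered})$ and checks it stays nonnegative for this particular $K$ while Remark~\ref{relations}(0) shows all genuinely competing terms have strictly smaller excess. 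Finally, nonvanishing of $Sq^a_* Q^I x$ as a whole then follows because this top term cannot be cancelled by anything of lower excess or total order.

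\textbf{Main obstacle.} The delicate point is the bookkeeping of which binomial coefficients stay odd through the $j$-fold iteration of the Nishida recursion, and simultaneously controlling that no Adem relation is needed to rewrite the output in admissible form until the very end. I expect the crux to be establishing that $\rho(i_1) \leqslant \rho(i_2) \leqslant \cdots \leqslant \rho(i_{j+1})$ under the minimality-of-$j$ hypothesis (this is the content of the "moreover" clause of Lemma~\ref{curtis}, but applied only to the initial segment where the Curtis hypothesis holds), and then leveraging this monotonicity to guarantee that each subtraction $i_k \mapsto i_k - 2^{\rho + j - k}$ genuinely flips an existing $1$-bit to $0$ rather than triggering a carry — which is what keeps the relevant coefficient $\binom{i_k - a_k + t_k}{\,\cdot\,}$ odd and, crucially, keeps the resulting pair $Q^{i_k - 2^{\rho+j-k}} Q^{i_{k+1} - 2^{\rho+j-k-1}}$ admissible so that Remark~\ref{relations}(4) (Adem reduces total order) lets me discard everything else into the error term $O$. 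Once that bit-arithmetic is pinned down, the admissibility check and the nonvanishing of the leading monomial are routine.
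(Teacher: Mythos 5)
Your proposal follows essentially the same route as the paper: apply $Sq^{2^{\rho+j}}_*$, unwind the Nishida recursion so that the single surviving admissible term is $Q^{i_1-2^{\rho+j-1}}\cdots Q^{i_j-2^\rho}Q^{i_{j+1}-2^\rho}Q^{i_{j+2}}\cdots Q^{i_s}x$, observe that the hypothesis $2i_{j+1}-i_j\geqslant 2^{\rho(i_{j+1})}$ is exactly admissibility of the pair $(i_j-2^\rho,i_{j+1}-2^\rho)$, that this term has excess equal to $\ex(Q^Ix)>0$, and that all other terms have lower excess and total order (Remark \ref{relations}), so no cancellation occurs. One small simplification over your sketch: the $\rho$-monotonicity from Lemma \ref{curtis} is not actually needed, since the coefficients at the first $j$ stages are of the form $\binom{\cdot}{0}=1$ and the only nontrivial coefficient, $\binom{i_{j+1}-2^\rho}{2^\rho}$, is odd directly by Remark \ref{ro}.
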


\begin{proof}
Assume that $Q^Ix$ satisfies the condition above. We may write this
condition as
$$i_j-2^\rho\leqslant 2i_{j+1}-2^{\rho+1}=2(i_{j+1}-2^\rho),$$
where $\rho=\rho(i_{j+1})$. This is the same as the admissibility condition for the pair $(i_j-2^\rho,i_{j+1}-2^\rho)$. In this case we use $Sq^{2^{\rho+j}}_*$ where we get
$$\begin{array}{lll}
Sq^{2^{\rho+j}}_*Q^Ix&=&\underbrace{Q^{i_1-2^{\rho+j-1}}Q^{i_2-2^{\rho+j-2}}\cdots
Q^{i_j-2^\rho}Q^{i_{j+1}-2^\rho}Q^{i_{j+2}}\cdots Q^{i_s}x}_A+O
\end{array}$$
where $O$ denotes other terms which according to Remark \ref{relations} is a sum of terms of lower excess and lower total order. The term $A$ in right hand side of the of the above equality is admissible. Moreover,
$$\begin{array}{lll}
\ex(A) & = & (i_1-2^{\rho+j-1})-(i_2-2^{\rho+j-2})-(i_j-2^\rho)-(i_{j+1}-2^\rho)-\\
                           &   & (i_{j+2}+\cdots+i_s+\dim x)\\
                           & = & i_1-(i_2+\cdots+i_s+\dim x)\\
                           & = & \ex(Q^Ix)>0.
\end{array}$$
First, this implies that $A$ is nontrivial. Second, being of higher excess and total order shows that $A$ will not be equal to any of terms in $O$. This implies that $Sq^{2^{\rho+j}}_*Q^Ix\neq 0$ and hence completes the proof.
\end{proof}

Notice that choosing the least $j$ is necessary, as otherwise we may not get nontrivial action (see Example \ref{exm}). Now, assume that the above condition does hold, but condition (2) in Theorem 1 fails. This case is resolved in the following theorem.

\begin{lmm}
Let $X$ be path connected. Suppose $I=(i_1,\ldots,i_s)$ is an admissible sequence, such that $\ex(Q^Ix)\geqslant 2^{\rho(i_1)}$, and $2i_{j+1}-i_j<2^{\rho(i_{j+1})}$ for all $1\leqslant j\leqslant s-1$. Then such a class is not $A$-annihilated.
\end{lmm}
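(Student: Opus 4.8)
The plan is to exhibit a single Steenrod operation $Sq^a_*$ that acts nontrivially on $Q^Ix$, using the fact that condition (2) is now violated, i.e. $\ex(Q^Ix)\geqslant 2^{\rho(i_1)}$, while conditions ensuring no Adem rewriting is needed still hold (the hypothesis $2i_{j+1}-i_j<2^{\rho(i_{j+1})}$ for all $j$). The natural candidate is $a=2^{\rho(i_1)}$. By Remark \ref{ro}, $\rho(i_1)$ is precisely the least $t$ with $\binom{i_1-2^t}{2^t}\equiv 1\pmod 2$, so the leading Nishida term $N(Sq^{2^{\rho(i_1)}}_*,Q^{i_1})=\binom{i_1-2^{\rho(i_1)}}{2^{\rho(i_1)}}Q^{i_1-2^{\rho(i_1)}}$ survives. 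First I would compute $Sq^{2^{\rho(i_1)}}_*Q^Ix$ by iterating the Nishida relations, and identify the ``leading'' admissible term — the one obtained by pushing the whole operation onto the first slot, namely $Q^{i_1-2^{\rho(i_1)}}Q^{i_2}\cdots Q^{i_s}x$ (since $x$ is $A$-annihilated, all the $Sq^t_*x$ with $t>0$ vanish, so only the $a^K=0$ part of $N$ contributes).

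The key points to verify are then: (i) this leading term is \emph{admissible}, so that no Adem relations are needed — here I would use that $2i_2-(i_1-2^{\rho(i_1)})=2i_2-i_1+2^{\rho(i_1)}\geqslant 2^{\rho(i_1)}>0$ together with the hypothesis $2i_2-i_1<2^{\rho(i_2)}$ and the monotonicity $\rho(i_1)\leqslant\rho(i_2)$ from Lemma \ref{curtis}, giving $i_1-2^{\rho(i_1)}\leqslant 2i_2$; (ii) this leading term is \emph{nonzero}, for which I compute its excess: $\ex\big(Q^{i_1-2^{\rho(i_1)}}Q^{i_2}\cdots Q^{i_s}x\big)=\ex(Q^Ix)-2^{\rho(i_1)}\geqslant 0$, so the operation is not below the dimension of what it acts on and the class is genuinely present in the basis (\ref{eq1}); and (iii) the coefficient is $1\pmod 2$, which is exactly Remark \ref{ro}. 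Finally I would invoke Remark \ref{relations}(0) and (4): every other term in the expansion of $Sq^{2^{\rho(i_1)}}_*Q^Ix$, whether already admissible or produced by Adem rewriting of non-admissible pairs, has strictly lower excess (and lower total order) than the leading term, so no cancellation can occur. Hence $Sq^{2^{\rho(i_1)}}_*Q^Ix\neq 0$ and $Q^Ix$ is not $A$-annihilated.

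The main obstacle I anticipate is the bookkeeping in step (i)–(iii): making precise that the leading admissible term really is the unique term of maximal excess after a possibly long chain of Nishida relations and Adem rewritings, and that its excess equals $\ex(Q^Ix)-2^{\rho(i_1)}$ so that the hypothesis $\ex(Q^Ix)\geqslant 2^{\rho(i_1)}$ is exactly what keeps it alive. This is essentially the ``sharp'' converse to Lemma \ref{curtis}, where that lemma gives $\ex(K)\leqslant \ex(I)-2^{\rho(i_1)}$ for \emph{all} admissible $K$ appearing; here I need to see that equality is attained by at least one $K$, which forces the borderline choice $a=2^{\rho(i_1)}$ and the borderline behaviour of the binomial coefficients. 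A clean way to organize this is to phrase the whole computation with lower-indexed operations $Q_E$, where $Sq^{2^{\rho(i_1)}}_*$ shifts $e_1\mapsto e_1-2^{\rho(i_1)}$ on the leading term while all other contributions decrease some $e_j$ by more, making the total-order comparison transparent; I would then only need to check that $e_1-2^{\rho(i_1)}=\ex(Q^Ix)-2^{\rho(i_1)}\geqslant 0$ remains a legitimate (nonnegative) excess, which is the hypothesis.
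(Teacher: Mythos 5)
Your opening moves coincide with the paper's: apply $Sq^{2^{\rho(i_1)}}_*$, single out the term $Q^{i_1-2^{\rho(i_1)}}Q^{i_2}\cdots Q^{i_s}x$, note it is admissible, has excess $\ex(Q^Ix)-2^{\rho(i_1)}\geqslant 0$, and carries coefficient ${i_1-2^{\rho(i_1)}\choose 2^{\rho(i_1)}}\equiv 1$ by Remark \ref{ro}. The gap is in your final step, where you claim that every other term in the expansion has strictly lower excess (and total order) than this leading term, citing Remark \ref{relations}(0) and (4), so that no cancellation can occur. That claim is not justified and is in fact false before Adem rewriting: the term of $O$ in which the Nishida relation passes a residue $r_1>0$ past the first entry and terminates at slot $\alpha$, namely $Q^{i_1-2^{\rho}+r_1}Q^{i_2-r_1+r_2}\cdots Q^{i_\alpha-r_{\alpha-1}}Q^{i_{\alpha+1}}\cdots Q^{i_s}x$, has excess $\ex(Q^Ix)-2^{\rho}+2r_1$, which is strictly \emph{larger} than the excess of your leading term. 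After rewriting such terms in admissible form, Lemma \ref{curtis} only gives the bound $\ex(K)\leqslant\ex(Q^Ix)-2^{\rho(i_1)}$, i.e.\ other admissible terms may a priori attain exactly the excess of the leading term, so excess and total-order considerations alone do not exclude a cancellation of $Q^{i_1-2^{\rho}}Q^{i_2}\cdots Q^{i_s}x$; Remark \ref{relations}(4) only says Adem rewriting lowers the order relative to the non-admissible term you started from, which began higher than the leading term.

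The paper closes this gap differently, and this is the idea missing from your proposal: one shows that $O$ vanishes identically, so no comparison of admissible monomials is needed at all. Writing each term of $O$ with its string of binomial coefficients, the last one is $\epsilon_\alpha={i_\alpha-r_{\alpha-1}\choose r_{\alpha-1}}$ with $0<r_{\alpha-1}$, and the hypothesis $2i_{j+1}-i_j<2^{\rho(i_{j+1})}$ enters precisely here: via Lemma \ref{curtis} it forces $\rho(i_1)\leqslant\rho(i_2)\leqslant\cdots\leqslant\rho(i_s)$, and the constraints $2r_k\leqslant r_{k-1}$, $r_1\leqslant 2^{\rho(i_1)-1}$ give $r_{\alpha-1}<2^{\rho(i_\alpha)}$, whence $\epsilon_\alpha\equiv 0$ mod $2$ by the characterization of $\rho$ in Remark \ref{ro}. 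Thus $Sq^{2^{\rho(i_1)}}_*Q^Ix$ equals the leading admissible term on the nose, which is nonzero. Your "sharp converse to Lemma \ref{curtis}" framing (equality attained, unique term of maximal excess) is exactly the part that needs this coefficient computation; as written, your argument does not supply it.
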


\begin{proof}
We use $Sq^{2^\rho}_*$ with $\rho=\rho(i_1)$ which gives
$$\begin{array}{lll}
Sq^{2^\rho}_*Q^Ix & = & Q^{i_1-2^\rho}Q^{i_2}\cdots Q^{i_s}x+O
\end{array}$$
where $O$ denotes other terms given by
$$O=\sum_{t>0}{i_1-2^\rho\choose
2^\rho-2t}Q^{i_1-2^\rho+t}Sq^t_*Q^{i_2}\cdots Q^{i_s}x.$$
Notice that $\ex(Q^Ix)\geqslant 2^{\rho(i_1)}$ ensures that $i_1$ is not of the form $2^\rho$. By iterated application of the Nishida relations we may write
$$O=\sum_{\alpha\leqslant s} \epsilon_1\cdots \epsilon_\alpha Q^{i_1-2^\rho+r_1}Q^{i_2-r_2+r_3}\cdots Q^{i_\alpha-r_\alpha}Q^{\i_{\alpha+1}}\cdots Q^{i_s}x$$
where
$$\epsilon_1={i_1-2^\rho\choose 2^\rho-2r_1},\textrm{ }\epsilon_2={i_2-r_1\choose r_1-2r_2},\ldots,\epsilon_{\alpha-1}={i_{\alpha-1}-r_{\alpha-2}\choose 2r_{\alpha-2}-2r_{\alpha-1}}, \textrm{ }
\epsilon_\alpha={i_\alpha-r_{\alpha-1}\choose r_{\alpha-1}}$$
such that $2r_k\leqslant r_{k-1}$ for all $k\leqslant\alpha$. The sequence $I$ satisfies the conditions of Lemma \ref{curtis} which in particular implies that $\rho(i_1)\leqslant\cdots\leqslant \rho(i_\alpha)\leqslant\cdots\leqslant\rho(i_s)$. Notice that $r_{\alpha-1}<2^{\rho(i_\alpha)-\alpha+1}<2^{\rho(i_\alpha)}$ which together with Remark \ref{ro} implies that $\epsilon_\alpha=0$ and therefore $O=0$. This then shows that
$$Sq^{2^\rho}_*Q^Ix  =  Q^{i_1-2^\rho}Q^{i_2}\cdots Q^{i_s}x\neq 0.$$
This completes the proof.
\end{proof}

Now we show that the condition (1) is also necessary in the proof of the Theorem 1.2

\begin{lmm}
Let $X$ be path connected, and let $Q^Ix\in H_*QX$ be a term of positive excess with  $I$ admissible such that $x\in \widetilde{H}_*X$ is not $A$-annihilated. Then $Q^Ix$ is not $A$-annihilated.
\end{lmm}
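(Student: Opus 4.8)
The plan is to reduce the non-triviality of $Sq_*$ on $Q^Ix$ to the non-triviality of $Sq_*$ on $x$, by choosing the Steenrod operation that ``sees'' the lowest-degree operation hitting $x$. First I would pick the smallest $a>0$ with $Sq^a_*x\neq 0$; such an $a$ exists since $x$ is not $A$-annihilated. The idea is then to apply $Sq^{2^{k}a}_*$ to $Q^Ix$ for a suitably large $k$ depending on $l(I)=s$, so that the iterated Nishida relations pass the whole operation through all the $Q$'s and deposit $Sq^a_*$ on $x$, producing a term
$$Q^{i_1-2^{s-1}a}Q^{i_2-2^{s-2}a}\cdots Q^{i_s - a}Sq^a_*x$$
(up to reindexing the exponent of $2$), plus a collection $O$ of other terms. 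One computes, exactly as in the excess computation of Lemma~4.5, that the excess of this leading term equals $\ex(Q^Ix) + a > 0$, hence it is a genuine non-zero admissible monomial once we know $Sq^a_*x\neq 0$ and once the internal Steenrod operations on $x$ do not interfere (here the minimality of $a$ is what guarantees that lower $Sq^t_*$, $0<t<a$, kill $x$, so no cancellation comes from the ``intermediate'' branches).

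The key steps, in order: (1) choose $a$ minimal with $Sq^a_*x\neq0$; (2) by iterating the Nishida relation, write $Sq^{2^s a}_*Q^Ix = \sum_K Q^K Sq^{a^K}_* x$ where each $K$ has length $s$; using minimality of $a$, only the branches with $a^K\in\{0,a\}$ can contribute, and I would argue the $a^K=a$ branch contributes a single admissible leading monomial $A$ of the displayed form; (3) apply the Adem relations to rewrite every term in admissible form, and invoke Remark~\ref{relations}(0) and~(4) to see that all the extra terms have strictly smaller excess and strictly smaller total order than $A$; (4) compute $\ex(A)=\ex(Q^Ix)+a>0$, so $A\neq 0$ and $A$ cannot be cancelled by anything in $O$; conclude $Sq^{2^s a}_*Q^Ix\neq0$. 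A subtlety worth flagging explicitly: unlike Lemma~4.5 the sequence $I$ here is not assumed to satisfy the inequalities $2i_{j+1}-i_j<2^{\rho(i_{j+1})}$, so Curtis's Lemma~\ref{curtis} is not directly available; this means I cannot simply quote an excess-drop estimate for the error terms and must instead rely on the softer statement that Adem and Nishida relations only decrease excess (Remark~\ref{relations}(0)), which still suffices because the leading term $A$ has \emph{strictly larger} excess than $\ex(Q^Ix)$, hence strictly larger than everything appearing in $O$.

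The main obstacle I expect is step (2)--(3): making sure that after passing to admissible form the leading monomial $A$ really survives with coefficient $1$ and is not hit by a returning term from some non-admissible $Q^K$ that gets rewritten via Adem. The clean way to handle this is to order all monomials produced at each stage by the total order, observe (Remark~\ref{relations}(4)) that Adem rewriting strictly lowers the total order, and observe that the Nishida ``error'' terms in $O$ already have strictly lower excess; since $A$ is simultaneously of maximal excess and maximal total order among everything produced, it is isolated and cannot cancel. One should also double-check the edge case $s=1$, where the statement reduces to $Sq^{2a}_*Q^{i_1}x$ having leading term $Q^{i_1-a}Sq^a_*x$ with excess $\ex(Q^{i_1}x)+a>0$ — a direct one-line application of the Nishida relation together with ${i_1-2a \choose 0}=1$ and minimality of $a$. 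Finally, I would remark that this lemma, together with Lemmas~4.4--4.6, completes the proof of Theorem~1, since conditions~(1)--(3) have now each been shown to be necessary and, by Lemma~4.3, jointly sufficient.
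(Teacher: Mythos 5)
Your overall strategy is the paper's: pick the minimal operation acting nontrivially on $x$ (the paper takes the least $t$ with $Sq^{2^t}_*x\neq 0$, which amounts to the same thing as your minimal $a$), apply the doubled operation $Sq^{2^s a}_*$, and observe that the unique ``all-halving'' branch of the iterated Nishida relation deposits $Sq^a_*x$ on the bottom and yields an admissible monomial with coefficient $1$, while minimality of $a$ kills every branch depositing $Sq^r_*$ with $0<r<a$. However, the step by which you isolate the leading term $A$ contains a genuine error. The excess of $A=Q^{i_1-2^{s-1}a}\cdots Q^{i_s-a}Sq^a_*x$ is not $\ex(Q^Ix)+a$: the drop $2^{s-1}a$ in the top entry is exactly compensated by the drops $2^{s-2}a,\ldots,a$ in the lower entries together with the drop $a$ in $\dim x$, so $\ex(A)=\ex(Q^Ix)$. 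Consequently your claim that $A$ has strictly larger excess than everything in $O$ fails: for instance the branch that halves down to $2a$ and then passes $Sq^0_*$ to $x$ produces the term $Q^{i_1-2^{s-1}a}\cdots Q^{i_{s-1}-2a}Q^{i_s-2a}x$ (with coefficient ${i_s-2a\choose 2a}$), whose excess is again exactly $\ex(Q^Ix)$. So the ``isolated by strictly larger excess'' mechanism, which you explicitly lean on because Curtis's lemma is unavailable here, does not exist, and the maximal-total-order assertion is likewise left unverified.

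The cancellation question you flag as the main obstacle has a much simpler answer, and it is the one the paper uses: by minimality of $a$, every surviving branch other than the all-halving one deposits $Sq^0_*$ on $x$, so every term of $O$ is of the form $Q^Jx$ with bottom class $x$, and Adem rewriting of non-admissible $Q^J$ never changes the bottom class. The leading term $A$, by contrast, sits on $Sq^a_*x\neq 0$, which is a sum of basis elements of $\widetilde H_*X$ in degree $\dim x-a<\dim x$. Since $A$ is already admissible with $\ex(A)=\ex(Q^Ix)>0$, it is a nonzero sum of admissible monomial generators whose bottom classes differ from $x$, and no monomial of that kind can arise from rewriting the terms $Q^Jx$; hence no cancellation is possible and $Sq^{2^sa}_*Q^Ix\neq 0$. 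With that substitution for your step (3)--(4) the argument is correct and coincides with the paper's proof; as written, the justification of the key non-cancellation step is based on a miscalculated excess and would not stand.
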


\begin{proof}
Let $t$ be the least number that $Sq^{2^s}_*x\neq 0$. If $I=(i_1,\ldots,i_s)$ we apply $Sq^{2^{s+t}}_*$ to $Q^Ix$ which gives
we get
$$\begin{array}{lll}
Sq^{2^{s+t}}_*Q^Ix & = & Q^{i_1-2^{s+t-1}}\cdots Q^{i_s-2^s}Sq^{2^s}_*x+O,
\end{array}$$
where $O$ denotes sum of the other terms which are of the form $Q^Jx$. This means that the first term in the above equality will not cancel with any of the other terms. Notice that the first term in the above expression is admissible, and $\ex(Q^{i_1-2^{s+t-1}}\cdots Q^{i_t-2^s}Sq^{2^s}_*x)=\ex(Q^Ix)>0$. Hence $Sq^{2^{s+t}}_*Q^Ix\neq 0$.
\end{proof}

\section{Proof of Theorem \ref{l1}}
\begin{lmm}\label{lemma}
Assume $X$ is path connected and let $\xi\in H_*QX$ be given by
$$\xi=\sum Q^Ix+D$$
where $I=(i_1,\ldots,i_s)$, $r>0$, varies over certain admissible terms, $\ex(Q^Ix)>0$, and $D$ denotes decomposable terms. Suppose $\xi$ is $A$-annihilated. Then $i_1$ is odd for all terms $Q^Ix$ with $\ex(Q^Ix)\geqslant 2$ .
\end{lmm}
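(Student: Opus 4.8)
The plan is to argue by contradiction: suppose some term $Q^Ix$ appearing in $\xi$ has $i_1$ even and $\ex(Q^Ix)\geqslant 2$, and show that this forces a nonzero Steenrod operation on $\xi$. First I would single out, among all such ``bad'' terms, the one of maximal total order; call it $Q^Ix=Q_Ex$ with $I=(i_1,\ldots,i_s)$, $i_1=2m$ even, and $e_1=\ex(Q^Ix)\geqslant 2$. The idea is to apply a carefully chosen $Sq^a_*$ — with $a$ a power of $2$ comparable to $\ex(Q^Ix)$ — and track the leading term it produces. Since the Nishida relations respect length (as recalled in Section 3), only terms of the same length $s$ in $\xi$ can contribute to $Sq^a_*\xi$ at a fixed length, so I can restrict attention to the length-$s$ part of $\xi$ without loss of generality. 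Within that part, the total order gives a well-defined maximal element, and the key point is to show the chosen operation sends $Q^Ix$ to a nonzero admissible monomial of total order strictly larger than the image of any other length-$s$ term.

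The concrete computation: because $i_1=2m$ is even, $\rho(i_1)=0$, so $2^{\rho(i_1)}=1$. I would like to use an operation that ``peels off'' a factor $2$ from the leading index. Looking at the Nishida relation $Sq^a_*Q^{i_1}=\sum_t \binom{i_1-a}{a-2t}Q^{i_1-a+t}Sq^t_*$, the $t=0$ term with $a$ equal to a suitable power of $2$ (say $a=2^{\ell}$ where $2^{\ell}\leqslant\ex(Q^Ix)<2^{\ell+1}$, so that $\ex(Q^{i_1-a}Q^{i_2}\cdots Q^{i_s}x)=\ex(Q^Ix)-a\geqslant 0$ and the binomial $\binom{i_1-2^\ell}{2^\ell}$ survives — this is where I would invoke Remark 4.2 to guarantee non-vanishing of the binomial coefficient, or adjust $a$ so that it does) produces $Q^{i_1-a}Q^{i_2}\cdots Q^{i_s}x$. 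I would compare this to Case analysis in Section 2: since this monomial has the same excess pattern in positions $2,\ldots,s$ but reduced first index, I need to check it is admissible (it is, since $i_1-a\leqslant i_1\leqslant 2i_2$) and that it does not cancel against the $O$-terms of lower excess/total order produced by the same Nishida computation, nor against the images of the other length-$s$ summands of $\xi$. The latter is the crux: a term $Q^Jy$ of total order strictly less than $Q^Ix$ has, under $Sq^a_*$, image built from monomials of total order at most that of $Sq^a_*Q^Jy$, and I would need the inequality ``applying Nishida (and Adem) relations does not raise total order'' — exactly Remark 3.5(0),(4) — to conclude the leading term survives.

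The main obstacle I anticipate is the non-vanishing of the relevant binomial coefficient together with controlling the ``$O$'' terms. Choosing $a=2^\ell$ is natural because it makes $\binom{i_1-2^\ell}{2^\ell}$ most likely to be $1$ (Remark 4.2 says $\rho(i_1)$ is the least such $\ell$), but one must ensure simultaneously that $2^\ell\leqslant\ex(Q^Ix)$ so that the resulting excess stays nonnegative; the hypothesis $\ex(Q^Ix)\geqslant 2$ is presumably exactly what guarantees there is room for such an $\ell\geqslant1$ when $i_1$ is even — if $i_1$ were odd, $\rho(i_1)\geqslant1$ and the story is different, which is why the conclusion is only about even $i_1$. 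A secondary subtlety: after applying $Sq^a_*$ the iterated Nishida relations may produce non-admissible monomials $Q^KSq^{a^K}_*x$ which then must be rewritten via Adem relations; here I would lean on Remark 3.5(0) and (4) to argue that this rewriting strictly decreases both excess and total order, so it cannot manufacture a copy of the leading admissible term $Q^{i_1-a}Q^{i_2}\cdots Q^{i_s}x$. Putting these together, $Sq^a_*\xi\neq0$, contradicting that $\xi$ is $A$-annihilated, and so no bad term exists; this proves every term $Q^Ix$ in $\xi$ with $\ex(Q^Ix)\geqslant2$ has $i_1$ odd.
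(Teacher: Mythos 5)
Your overall strategy (contradiction via a well-chosen $Sq^a_*$, tracking a leading admissible monomial) is the right general idea, but the specific choice of operation creates genuine gaps. The paper simply applies $Sq^1_*$: since $Sq^1_*Q^{2t}=Q^{2t-1}$ and $Sq^1_*Q^{\mathrm{odd}}=0$, every term with $i_1$ odd dies outright, and each bad term $Q^Ix$ (with $i_1$ even, $\ex\geqslant 2$) maps to $Q^{i_1-1}Q^{i_2}\cdots Q^{i_s}x$, which is still admissible (no Adem relations are ever needed), has excess $\ex(Q^Ix)-1\geqslant 1$, hence is a polynomial generator and cannot be cancelled by $Sq^1_*D$, which stays decomposable by the Cartan formula; distinct bad terms give distinct images, so $Sq^1_*\xi\neq 0$. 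Note that your own invocation of Remark 4.2 already points there: for $i_1$ even, $\rho(i_1)=0$, so the guaranteed-nonzero binomial corresponds to $a=2^{\rho(i_1)}=1$, not to $a$ comparable with the excess.

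The concrete problems with your choice $a=2^\ell$, $2^\ell\leqslant\ex(Q^Ix)<2^{\ell+1}$, are threefold. First, $\binom{i_1-2^\ell}{2^\ell}$ need not be odd for that $\ell$ (e.g.\ $i_1=6$, $\ell=1$ gives $\binom{4}{2}\equiv 0$ mod $2$); ``adjust $a$ so that it does'' is exactly the step that needs an argument, and the only $a$ that works uniformly for even $i_1$ is $a=1$. Second, requiring only $\ex(Q^Ix)-a\geqslant 0$ is not enough: if the image has excess $0$ it is a square, hence decomposable, and can be cancelled by $Sq^a_*D$ --- this is precisely the phenomenon behind the paper's counterexample $Q^{2^t}a_{2^t-1}+a_{2^t}a_{2^t-1}$ and is why the hypothesis $\ex\geqslant 2$ together with $a=1$ (giving image excess $\geqslant 1$) is what one actually needs. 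Third, once $a\geqslant 2$, the terms with $i_1$ odd no longer vanish under $Sq^a_*$; since you chose your leading term maximal only among the \emph{bad} terms, good terms of strictly larger total order can contribute monomials under $Sq^a_*$, and the remark that Nishida/Adem relations do not raise the total order does not exclude their images from colliding with your leading monomial. The $a=1$ choice eliminates all three issues at once, which is why the paper's proof is short.
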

Before proceeding with proof, recall the Nishida relations $Sq^1_*Q^{2t}=Q^{2t-1}$ and $Sq^1_*Q^{2t-1}=0$. Moreover, we note that we have the Cartan formula $Sq^a_*(xy)=\sum_r (Sq^r_*x)(Sq^{a-r}_*y)$ for all $x,y\in H_*\Omega X$ \cite{15}.
\begin{proof}
Proof is by contradiction. Assume there is a term $Q^Ix$ in $\xi$ with $i_1$ even and $\ex(Q^Ix)\geqslant2$.
Applying the Nishida relations above we obtain
$$Sq^1_*\xi=\sum_{i_1\textrm{ even}} Q^{i_1-1}Q^{i_2}\cdots Q^{i_s}x+Sq^1_*D.$$
According to the Cartan formula $Sq^1_*D$ is a sum of decomposable terms. Moreover, since the Nishida relations respect length we then focus on terms $Q^Ix$ where $l(I)=s$ is fixed. If $Q^Ix$ is a term with $\ex(Q^Ix)\geqslant 2$ and $i_1$ even, then $Sq^1_*Q^Ix$ is of positive excess, i.e. it is not decomposable. If $Q^Jy$ is another term with $\ex(Q^Jy)\geqslant2$, $j_1$ even, and $Q^Ix\neq Q^Jy$ then it is clear that $Sq^1_*(Q^Ix+Q^Jy)\neq 0$. This shows that the first sum in the above expression for $Sq^1_*\xi$ is nontrivial. This contradicts the fact that $\xi$ is $A$-annihilated. This completes the proof.
\end{proof}

\begin{crl}\label{c1}
$(1)$ Let $\xi\in H_*QX$ be an even dimensional $A$-annihilated class with $\sigma_*\xi\neq 0$ where $\sigma_*:H_*QX\to H_*Q\Sigma X$ is the homology suspension. Then $\xi=\sum Q^Ix$ modulo decomposable terms such that $i_1$ is odd.\\
$(2)$ Let $\xi$ be an odd dimensional $A$-annihilated class. Then $\xi=\sum Q^Ix$ modulo decomposable terms, with $\ex(Q^Ix)>0$, and $i_1$ odd if $\ex(Q^Ix)\geqslant 3$.
\end{crl}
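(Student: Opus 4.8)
The plan is to deduce both statements directly from Lemma~\ref{lemma}, using only the elementary observation that the internal degree of a monomial generator determines the parity of its excess. Indeed, for an admissible $I=(i_1,\ldots,i_s)$ and $x\in\widetilde H_*X$ one has
$$\dim Q^Ix=i_1+(i_2+\cdots+i_s+\dim x)=2i_1-\ex(Q^Ix),$$
so $\dim Q^Ix\equiv\ex(Q^Ix)\pmod 2$. The first thing I would do is put $\xi$ (with $X$ path connected, as throughout this section) in its canonical form with respect to the polynomial basis (\ref{eq1}): write $\xi=\sum Q^Ix+D$, where the sum runs over the admissible generators $Q^Ix$ of positive excess that actually occur and $D$ is a sum of decomposable terms (honest products of generators, together with the squares $Q^Ix$ having $\ex=0$). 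Since $H_*QX$ is graded and $\xi$ is homogeneous, every generator $Q^Ix$ in this sum satisfies $\dim Q^Ix=\dim\xi$.

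For part $(1)$, $\dim\xi$ is even, so by the parity identity every generator occurring has even excess, hence $\ex(Q^Ix)\geqslant2$; Lemma~\ref{lemma} then applies to each of them and forces $i_1$ to be odd. The hypothesis $\sigma_*\xi\neq0$ is used only to guarantee that the linear part $\sum Q^Ix$ is nonzero, so that the conclusion is not vacuous; it plays no other role. For part $(2)$, $\dim\xi$ is odd, so each occurring generator has odd excess, i.e. $\ex(Q^Ix)\in\{1,3,5,\ldots\}$; for those with $\ex(Q^Ix)\geqslant2$, equivalently $\ex(Q^Ix)\geqslant3$, Lemma~\ref{lemma} again yields $i_1$ odd, while the terms of excess exactly $1$ are left uncontrolled --- which is precisely the content of the statement, and, as the class $Q^2a_1+a_1a_2+a_1^3+a_3\in H_3QP$ recorded after Theorem~\ref{l1} shows, cannot be improved.

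Since the argument is a short bookkeeping reduction to the already-established Lemma~\ref{lemma}, I do not anticipate a genuine obstacle. The only points that need a word of care are that the splitting of $\xi$ into a linear (generator) part plus decomposables is well defined --- immediate because (\ref{eq1}) exhibits $H_*QX$ as a polynomial algebra on the $Q^Ix_\mu$ of positive excess --- and that homogeneity places every occurring generator in the single degree $\dim\xi$, which is what makes the parity identity bite.
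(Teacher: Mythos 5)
Your argument is correct and is essentially the paper's own proof: both rest on the parity observation that $\ex(Q^Ix)\equiv\dim Q^Ix\pmod 2$, which in the even-dimensional case forces $\ex(Q^Ix)\geqslant 2$ and in the odd-dimensional case makes $\ex(Q^Ix)\geqslant 2$ equivalent to $\ex(Q^Ix)\geqslant 3$, after which Lemma \ref{lemma} gives that $i_1$ is odd for those terms. Your extra remarks on homogeneity and the role of $\sigma_*\xi\neq 0$ are consistent with how the paper uses these hypotheses.
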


\begin{proof}
$(1)$ Since $\sigma_*\xi\neq0$ hence $\xi=\sum Q^Ix+D$ where $D$ denotes sum of decomposable terms. Notice that $\ex(Q^Ix)$ and $\dim Q^Ix$ have the same parity. Since $Q^Ix$ is even dimensional and $\ex(Q^Ix)>0$ hence $\ex(Q^Ix)\geqslant 2$ and we are done.\\
$(2)$ Since $Q^Ix$ is odd dimensional then $\ex(Q^Ix)$ is also odd. Applying previous lemma shows that $i_1$ is odd if $\ex(Q^Ix)\geqslant 3$.
\end{proof}

Notice that we have counter example for the cases of odd dimensional classes with terms $\ex(Q^Ix)=1$ e.g. the class $Q^{2^t}a_{2^t-1}+a_{2^t}a_{2^t-1}\in H_*QP$ is $Sq^1_*$-annihilated but $Q^{2^t}a_{2^t-1}$ does not satisfy the above theorem. The restriction on the odd dimensional classes can be lifted if we restrict to $A$-annihilated primitive classes with $X=\Sigma Y$.
\begin{lmm}\label{53}
Let $\xi\in H_*QX$ be an odd dimensional $A$-annihilated primitive class where $X=\Sigma Y$. Then $\xi=\sum Q^Ix$ with $\ex(Q^Ix)>0$ and $i_1$ odd.
\end{lmm}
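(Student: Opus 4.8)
The plan is to upgrade Corollary \ref{c1}(2), which already gives $\xi=\sum Q^Ix$ modulo decomposables with $i_1$ odd whenever $\ex(Q^Ix)\geqslant 3$, by using primitivity and the suspension hypothesis $X=\Sigma Y$ to kill the remaining bad case $\ex(Q^Ix)=1$. First I would record that since $\xi$ is odd dimensional and primitive, it has no decomposable part at all: a primitive in the polynomial Hopf algebra (\ref{eq1}) must be a sum of the polynomial generators $Q^Ix$ themselves (squares are the only primitive decomposables, and those are even dimensional), so $\xi=\sum Q^Ix$ with $\ex(Q^Ix)>0$ exactly. Thus the only terms not yet controlled are those with $\ex(Q^Ix)=1$, which by parity are precisely the $Q^Ix=Q_Ex$ whose excess sequence $E$ starts with $e_1=1$; writing these in lower-indexed form, $e_1=1$ means $i_1 = \dim x + i_2 + \cdots + i_s + 1$, and such a term is odd iff $i_1$ is odd iff $\dim x + \ex(Q^{I_2}x)$ is even.

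Next I would isolate these excess-one terms and apply $Sq^1_*$ together with the Nishida relation $Sq^1_*Q^{2t}=Q^{2t-1}$, $Sq^1_*Q^{2t-1}=0$, exactly as in Lemma \ref{lemma}. The point is that for a term $Q^Ix$ with $e_1=1$ and $i_1$ even, $Sq^1_*$ lowers $i_1$ to $i_1-1$ and produces a term $Q^{i_1-1}Q^{i_2}\cdots Q^{i_s}x = Q_{E'}x$ with $E' = (0, e_2, \ldots, e_s)$ — that is, a \emph{square}, hence decomposable, hence \emph{not} primitive. Since $\xi$ is primitive and $Sq^1_*$ of a primitive is primitive, $Sq^1_*\xi$ is a primitive class; but its contribution from the excess-one even-$i_1$ terms is a sum of squares $\sum (Q^{I_2}x)^2$, and these must cancel among themselves or against contributions from the $\ex\geqslant 3$ terms. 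Here I would use that $X=\Sigma Y$, so $H_*X=\widetilde H_*X$ lives in the image of the suspension and the generators $x$ are indecomposable in $H_*X$; one shows the squares $(Q^{I_2}x)^2$ arising this way are linearly independent of one another and of the (non-square) images of the $\ex\geqslant 3$ terms under $Sq^1_*$, using that $Sq^1_*$ on an $\ex\geqslant 3$, $i_1$-odd term $Q^Ix$ gives either zero or a term of positive excess (never a square, since it cannot drop the leading excess to $0$). Therefore each individual square $(Q^{I_2}x)^2$ must already be zero, forcing the original excess-one even-$i_1$ term $Q^Ix=Q^{i_1}Q^{I_2}x$ to be zero in $H_*QX$ — a contradiction with its appearing in $\xi$.

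The step I expect to be the main obstacle is the cancellation/independence bookkeeping in the last paragraph: after $Sq^1_*$, the various images need not be distinct polynomial generators, and one must argue carefully — presumably by bounding excess and using that the Nishida and Adem relations respect length (as noted after the proof of Theorem \ref{t2}) — that a square $(Q^{I_2}x)^2$ cannot be produced or cancelled by the odd-$i_1$ terms, nor by squares coming from a \emph{different} excess-one term $Q^Jy$ with $y\neq x$ or $I_2 \neq J_2$. For the latter one invokes precisely the defining hypothesis behind Theorems \ref{t2} and the square-root lemma for $X=\Sigma Y$: distinct generators square to linearly independent classes, so $(Q^{I_2}x)^2 + (Q^{J_2}y)^2 \neq 0$ whenever the summands are distinct and nonzero. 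Once independence is in hand the contradiction is immediate and the lemma follows; combined with Corollary \ref{c1}(2) this yields $i_1$ odd for \emph{all} terms, completing the proof of Theorem \ref{l1}(2).
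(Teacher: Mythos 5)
Your second step is essentially the paper's argument, but your first step --- killing the decomposable part $D$ --- has a genuine gap. You claim that because $\xi$ is primitive and odd dimensional, $D$ must vanish, on the grounds that ``squares are the only primitive decomposables.'' That fact (Milnor--Moore) applies only once you know $D$ itself is primitive, and primitivity of $\xi$ does not give this: the reduced coproducts of the generator terms $Q^Ix$ can cancel against those of the decomposable terms when the classes $x\in\widetilde H_*X$ are not primitive. The paper's own counterexample $Q^2a_1+a_1a_2+a_1^3+a_3\in H_3QP$ is exactly an odd dimensional, $A$-annihilated, primitive class with a non-square decomposable part and even leading entry, so an argument that never uses more than primitivity of $\xi$ and the polynomial structure (\ref{eq1}) cannot be correct --- it would apply verbatim to $X=P$. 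The hypothesis $X=\Sigma Y$ enters precisely here, and not in the way you invoke it later: what matters is not that the generators $x$ are indecomposable in $H_*X$, nor that distinct generators square independently (both hold for $P$ as well), but that the reduced coproduct on $\widetilde H_*\Sigma Y$ is trivial, so every $\Sigma y$ is primitive, hence every $Q^I\Sigma y$ is primitive, hence $D=\xi-\sum Q^I\Sigma y$ is itself a primitive decomposable, hence a square, hence zero by parity. This is the paper's route and it is the crux of the lemma.

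Once $D=0$ is in hand, your $Sq^1_*$ argument matches the paper's and the ``cancellation bookkeeping'' you flag as the main obstacle is in fact immediate: by Corollary \ref{c1}(2) any term with $\ex(Q^Ix)\geqslant 3$ has $i_1$ odd, and the Nishida relation $Sq^1_*Q^{2t-1}=0$ kills such a term outright (not merely producing something of positive excess), so only the excess-one, even-$i_1$ terms contribute and $Sq^1_*\xi=\bigl(\sum_{i_1\ \mathrm{even}}Q^{I_2}x\bigr)^2$. For excess-one terms the leading entry $i_1$ is determined by $(I_2,x)$, so distinct terms give distinct admissible monomials $Q^{I_2}x$, the inner sum is nonzero, and squaring is injective in a polynomial algebra over $\bZ/2$; no Adem-relation or length analysis is needed. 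So the conclusion follows, but only after the first step is repaired as above.
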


\begin{proof}
Since $\xi$ is odd dimensional and $X$ is a suspension, we may write $\xi=\sum Q^I\Sigma y+D$ where $D$ denotes the sum of decomposable terms. Notice that each term $Q^I\Sigma y$ is primitive, hence $D$ must be a primitive class. A decomposable primitive class should be a square. This contradicts the fact that $\xi$ is odd dimensional. Hence, $D=0$.\\
Without loss of generality assume all terms are of excess equal to $1$. Then,
$$Sq^1_*\xi=(\sum_{i_1\textrm{ even}} Q^{I_2}x)^2\neq 0.$$
But this contradicts the fact that $\xi$ is $A$-annihilated. This completes the proof.
\end{proof}

Note that in particular, every spherical class $\xi\in H_*QS^n$, $n>0$, satisfies the above lemma.\\

Given two sequence $I,J$ of the same length, we write $I-J$ for the obvious subtraction defined component-wise. We write $2|I$ if $2|i_j$ for all $1\leqslant j\leqslant s$. We have the following.

\begin{lmm}\label{55}
Let $X$ be path connected, and let $\xi=\sum Q^Ix+D\in H_*QX$ be an $A$-annihilated class with $i_1$ odd for all $I$. Then $I$ only consists of odd entries for all $I$.
\end{lmm}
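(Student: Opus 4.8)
The plan is to argue by contradiction, using that the Nishida relations preserve length, and to detect the lowest‑position even entry by a single operation $Sq^{2^m}_*$. Write $\xi=\sum Q^Ix+D$ with $D$ a sum of decomposables and each $Q^Ix$ one of the polynomial generators of $H_*QX$, so $\ex(Q^Ix)>0$ and, by hypothesis, $i_1$ is odd for every $I$. Suppose some $Q^Ix$ has an even entry, and let $m+1$ be the least position at which an even entry occurs in any term; then $m\geqslant1$, no term has an even entry among its first $m$ indices (so any term of length $\leqslant m$ has only odd entries), and at least one term has its $(m{+}1)$-st entry even. Among the latter pick $Q^{I^*}x^*$, $I^*=(i^*_1,\dots,i^*_s)$, of maximal total order; then $s\geqslant m+1$. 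The goal is to show $Sq^{2^m}_*\xi\neq 0$, contradicting that $\xi$ is $A$-annihilated.

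Iterating the Nishida relations along the diagonal — at the $j$-th step, $1\leqslant j\leqslant m$, keep the summand with lower index $t=2^{m-j}$, whose coefficient $\binom{i^*_j-2^{m-j+1}}{0}$ is $1$ — and then using $Sq^1_*Q^{i^*_{m+1}}=Q^{i^*_{m+1}-1}$, valid precisely because $i^*_{m+1}$ is even, shows that $Sq^{2^m}_*Q^{I^*}x^*$ contains with coefficient $1$ the monomial
\[
T=Q^{i^*_1-2^{m-1}}Q^{i^*_2-2^{m-2}}\cdots Q^{i^*_{m-1}-2}\,Q^{i^*_m-1}\,Q^{i^*_{m+1}-1}\,Q^{i^*_{m+2}}\cdots Q^{i^*_s}x^* .
\]
One checks that $T$ is admissible (the only nonobvious junction is $Q^{i^*_m-1}Q^{i^*_{m+1}-1}$, which is admissible because $i^*_m$ is odd, so $i^*_m\ne 2i^*_{m+1}$, hence $i^*_m-1\leqslant 2(i^*_{m+1}-1)$), and a telescoping count gives $\ex(T)=\ex(Q^{I^*}x^*)>0$, so $T$ is a nonzero generator with excess sequence $(e^*_1,\dots,e^*_m,e^*_{m+1}-1,e^*_{m+2},\dots,e^*_s)$, where $(e^*_j)$ is that of $Q^{I^*}x^*$. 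By Remark \ref{relations} (Adem and Nishida never raise the total order, and the step $Sq^1_*Q^{i^*_{m+1}}$ lowers it here), $T$ is the unique term of maximal total order in $Sq^{2^m}_*Q^{I^*}x^*$. Note that its $m$-th index, $i^*_m-1$, is even.

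It remains to rule out any other contribution to $T$ in $Sq^{2^m}_*\xi$. The part $Sq^{2^m}_*D$ is decomposable by the Cartan formula, so it misses the generator $T$; and since the Nishida relations preserve length, only length-$s$ terms of $\xi$ matter. For a length-$s$ term $Q^Iy\neq Q^{I^*}x^*$ with $i_{m+1}$ even, every monomial of $Sq^{2^m}_*Q^Iy$ has total order at most that of its own diagonal term, whose excess sequence is built from that of $Q^Iy$ exactly as above; maximality of $Q^{I^*}x^*$ makes this at most the total order of $T$, with equality forcing $Q^Iy=Q^{I^*}x^*$, which is excluded, so $T$ does not occur. For a length-$s$ term $Q^Iy$ with $i_{m+1}$ odd, the indices $i_1,\dots,i_{m+1}$ are all odd (positions $1,\dots,m$ by minimality of $m+1$), so its diagonal branch dies via $Sq^1_*Q^{i_{m+1}}=0$; and the parity rules of Remark \ref{relations}(1)--(3) — an odd-index Nishida operation kills an odd-index $Q$, so on any branch surviving the first $m$ Nishida steps the intermediate operations stay even and the resulting first $m$ indices stay odd, which the ensuing Adem normalisation preserves — force every admissible monomial of $Sq^{2^m}_*Q^Iy$ to have an odd $m$-th index, hence to differ from $T$. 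Therefore the coefficient of $T$ in $Sq^{2^m}_*\xi$ is $1$, a contradiction; so no $I$ has an even entry.

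The hard part will be this non-cancellation step: making sure $T$ really is of top total order in $Sq^{2^m}_*Q^{I^*}x^*$, and that terms with an odd $(m{+}1)$-st index cannot regenerate $T$. This is governed entirely by the interplay of the total order, the length-preservation of the Nishida relations, and the Adem/Nishida parity bookkeeping of Remark \ref{relations}. One mild extra point is that distinct terms may involve distinct basis elements of $\widetilde H_*X$; when $X$ has at most one cell in each dimension — in particular for $X=P,S^1,\Sigma\bC P_+$ — one can first reduce, as in the proof of Theorem \ref{t2}, so that a basis element is determined by its degree, which removes that bookkeeping.
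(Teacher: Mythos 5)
Your argument is essentially the paper's own proof of Lemma \ref{55}: both apply $Sq^{2^m}_*$ (the paper's $Sq^{2^s}_*$) at the first position where an even entry can occur, isolate the admissible ``diagonal'' term of unchanged excess whose $(m{+}1)$-st entry becomes odd, and rule out cancellation using that the Nishida relations preserve length together with the parity and total-order bookkeeping of Remark \ref{relations}. The only cosmetic difference is that you select a single term of maximal total order among those with an even $(m{+}1)$-st entry, whereas the paper runs an induction on the position and observes directly that distinct source terms yield distinct diagonal terms; the substance of the argument is the same.
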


\begin{proof}
The proof is by induction. Suppose $I=(i_1,\ldots,i_l)$ is given with first $s$ entries odd numbers. Applying $Sq^{2^s}_*$ to $Q^Ix$, and before applying possible Adem relations to non-admissible terms, we obtain
$$\begin{array}{lll}
Sq^{2^s}_*Q^Ix & = & \epsilon Q^{i_1-2^{s-1}}Q^{i_2-2^{s-2}}\cdots Q^{i_s-1}Q^{i_{s+1}-1}Q^{I_{s+2}}x +\\
               &   & \sum_{M\in A_{(I,x)}} \epsilon_M Q^{i_1-2^{s-1}}Q^{I_{2,s}-M}Q^{I_{s+1}}x + \\
               &   & \sum_{N\in B_{(I,x)},t<2^{s-1}} \epsilon_N Q^{i_1-t}Q^{I_{2,s}-N}Q^{I_{s+1}}x
               \end{array}$$
where $I_{2,s}=(i_2,\ldots,i_s)$, $A_{(I,x)}\subset\{M=(m_1,\ldots,m_{s-1}):2|M, \dim M=2^s-1\}$, $B_{(I,x)}\subset\{N=(n_1,\ldots,n_{s-1}):2|N,\dim N=2^s-t\}$. Here, $\epsilon=1$ if $i_{s+1}$ is even, and $\epsilon=0$ otherwise; $\epsilon_M$ and $\epsilon_N$ are the coefficients determined by iterated applications of the Adem relations. Notice that the first term is admissible, and is nontrivial if $i_{s+1}$ is odd. In this case, the $(s+1)$-th entry of the sequence $({i_1-2^{s-1}},{i_2-2^{s-2}}\ldots {i_s-1},{i_{s+1}-1},{I_{s+2}})$ is $i_{s+1}-1$ which is an odd number. However, according Remark \ref{relations} applying the Adem relations to the non-admissible terms in the next two sums will result in terms whose $(s+1)$-th entry is an even number. This implies that none of the terms in these sums will cancel with the first term of the expression for $Sq^{2^s}_*Q^Ix$ if $i_{s+1}$ is even. Moreover, notice that every single term in $\sum_{N\in B,t<2^{s-1}} \epsilon_N Q^{i_1-t}Q^{I_{2,s}-N}Q^{I_{s+1}}x$ is of excess strictly less than $\ex(Q^Ix)$ whereas
$$\ex(Q^{i_1-2^{s-1}}Q^{i_2-2^{s-2}}\cdots Q^{i_s-1}Q^{i_{s+1}-1}Q^{I_{s+2}}x)=\ex(Q^Ix).$$
The terms of the first sum are of excess at most equal to $\ex(Q^Ix)$.\\

Now assume $\xi=\sum Q^Ix$ modulo decomposable terms where $i_1$ is odd, $x\in H_*X$, and $I$ varies over certain admissible terms. Assume that we have shown the first $s$ entry of all sequences of length $r$ are odd. We like to show that the $(s+1)$-th entry is also odd. Applying $Sq^{2^s}_*$ we obtain
$$\begin{array}{lll}
Sq^{2^s}_*\xi  & = & \sum_{i_{s+1}\textrm{ even}} Q^{i_1-2^{s-1}}Q^{i_2-2^{s-2}}\cdots Q^{i_s-1}Q^{i_{s+1}-1}Q^{I_{s+2}}x +\\
               &   & \sum_{A_{(I,x)}}\sum_{M\in A} \epsilon_M Q^{i_1-2^{s-1}}Q^{I_{2,s}-M}Q^{I_{s+1}}x + \\
               &   & \sum_{B_{(I,x)}}\sum_{N\in B,t<2^{s-1}} \epsilon_N Q^{i_1-t}Q^{I_{2,s}-N}Q^{I_{s+1}}x.
\end{array}$$
First, note that non of terms in the first term will cancel with any of terms in the other two sums, because of existing an odd entry at $(s+1)$-th entry. Moreover, if we have two distinct terms, $Q^Ix$ and $Q^IJy$ in the expression for $\xi$ then it is clear that
$$Q^{i_1-2^{s-1}}Q^{i_2-2^{s-2}}\cdots Q^{i_s-1}Q^{i_{s+1}-1}Q^{I_{s+2}}x\neq Q^{j_1-2^{s-1}}Q^{j_2-2^{s-2}}\cdots Q^{j_s-1}Q^{j_{s+1}-1}Q^{J_{s+2}}y.$$
This implies that $Sq^{2^s}_*\xi\neq 0$ which is a contradiction. Hence, the lemma is proved.
\end{proof}

The above lemma together with Lemma \ref{53} and Corollary \ref{c1} completes the proof of Theorem 3.

\section{Proof of Lemma $4$}
First, we recall the following. The fact that $r$ is dual to the cup-squaring map can be used to show that
\begin{eqnarray}\label{root}
rQ^{2i}\xi=Q^ir\xi, &  rQ^{2i+1}=0.
\end{eqnarray}
This implies that any term with $Q^Ix$ with $I$ having at least one odd entry belongs to the kernel of $r$.

We prove the statement $(1)$. The proof of $(2)$ is similar.\\

A spherical class $\xi\in H_{*+1}Q\Sigma Y$ pulls back to a spherical class $\xi_0\in H_*Q_0Y$ where $Q_0Y$ denotes the base-point component of $QY$. For simplicity, we assume $Y$ is path connected so that $Q_0Y=QY$ and assume $t=2$, i.e. $\xi=\zeta^4=Q^{2d}Q^d\zeta$. The cases $t>2$ are similar. The class $\xi$ can be primitive only if $\zeta$ is. The Cartan formula $Sq^{2r}_*\zeta^2=(Sq^r_*\zeta)^2$ implies that $\zeta$ is $A$-annihilated. Since $\sigma_*\zeta\neq 0$ follows that $\zeta=\sum Q^I\Sigma y+D$ where $D=\sum Q^J\Sigma y$ with $\ex(Q^J\Sigma y)=0$. The class $\xi$ pulls back to a spherical class
$$\xi_0=\sum Q^{2d}Q^dQ^Iy+\sum Q^{2d}Q^dQ^Jy=Q^{2d}Q^d(\sum Q^Iy+\sum Q^Jy)=\sum Q^{2d}Q^dQ^Ky$$
modulo decomposable terms, where $K$ takes all $I$'s and $J$'s into account. The class $\xi_0$ is an $A$-annihilated primitive class. The primitivity implies that the above some, representing the nondecomposable part of $\xi_0$ must belong to $\ker r$ \cite[Proposition 4.23]{12}. Since, $r_Y$ is a monic it follows that $r:H_*Q_0Y\to H_*Q_0Y$ satisfies the required condition in Theorem 4, i.e. $r$ maps $\sum Q^{2d}Q^dQ^Ky$ trivially if it maps each term $Q^{2d}Q^dQ^Ky$ trivially. Hence, $d$ must be odd or $K$ has to have at least one odd entry or $y$ belongs to $\ker r_Y$. Each class $Q^dQ^Ky$ corresponds to a unique primitive $P_{K,y}$ modulo decomposable terms. That is we may write
$$\xi=Q^{2d} \sum P_{K,y}+D$$
where $D$ denotes sum of decomposable terms, which is also ought to be primitive. Hence, $D=0$. Applying $Sq^1_*$ we obtain
$$Sq^1_*\xi=(\sum P_{K,y})^2\neq 0.$$
This contradicts the fact that $\xi_0$ is $A$-annihilated, and completes the proof.

I have been an independent mathematician at the School of Mathematics/The University of Manchester since May 2009 and I am grateful to many individual in the School for their support as well as for the hospitality of the School. I am grateful to Peter Eccles who read the draft and made many helpful suggestion. Some of the results in this paper are from my PhD \cite{100} under supervision of Peter Eccles at the University of Manchester. I am very grateful to him for his unlimited encouragement/mathematical discussions since the beginning of my PhD which still continues till today. I was partly supported by the School of Mathematics/The University of Manchester during my PhD and I am very grateful for this support. Finally, I like to thank my family, especially my parents, for their full support.


\begin{thebibliography}{}

\bibitem  {0}Mohammad A. Asadi-Golmankhaneh, Peter J. Eccles `Determining the characteristic numbers of self-intersection manifolds' \textit{J.
              London Math. Soc.} (2) 62 (2000), no. 1, 278--290

\bibitem  {-1}M. G. Barratt, Peter J. Eccles `$\Gamma^+$-structures. III. The stable structure of $\Omega^\infty\Sigma^\infty A$' \textit{Topology} 13 (1974), 199-–207

\bibitem  {1}F. R. Cohen, T. J. Lada, J. P. May `The homology of iterated loop spaces' Lecture Notes in Mathematics, Vol.533 \textit{Springer-Verlag,
             Berlin-New York}, 1976

\bibitem  {2}Joel M. Cohen `The decomposition of stable homotopy' \textit{Ann. of Math.} (2) 87 1968 305--320

\bibitem  {3}E. B. Curtis `The Dyer-Lashof algebra and the $\Lambda $-algebra' \textit{Illinois J. Math.} 19 pp231--246, 1975

\bibitem  {4}P J. Eccles, H. Zare \textit{The Hurewicz image of the $\eta_i$ family and a polynomial subalgebra of $H_*\Omega^{2^{i+1}-8}S^{2^i-2}$} preprint available at http://arxiv.org/abs/0909.3791

\bibitem {8} Nguy\~{\^{A}e}n H. V. Hu'ng, V\~{o} T. N. Qu\`{y}nh `The squaring operation on $\mathcal{A}$-generators of the Dickson algebra'
             \textit{Math. Proc. Cambridge Philos. Soc.} 148 (2010), no. 2, 267--288

\bibitem {9}Ib Madsen `On the action of the Dyer-Lashof algebra in $H_*(G)$' \textit{Pacific J. Math.} Vol.60 No.1, pp235--275, 1975

\bibitem {10} M. Mahowald `A new infinite family in ${}_{2}\pi_{*}{}^s$' \textit{Topology} 16 (1977), no. 3, 249--256


\bibitem {12}J. W. Milnor, J. C. Moore `On the structure of Hopf algebras' \textit{Ann. of Math.} Vol.2 No.81 pp211--264, 1965

\bibitem {14}V. P. Snaith `A stable decomposition of $\Omega^nS^nX$' \textit{J. London Math. Soc.} (2) 7 (1974), 577–-583

\bibitem {15}R. J. Wellington `The unstable Adams spectral sequence for free iterated loop spaces'
                      \textit{Mem. Amer. Math. Soc.}  Vol.36 No.258, 1982

\bibitem {100}H. Zare `On Spherical Classes in $H_*QS^n$' PhD Thesis, \textit{The University of Manchester}, (2009)  available at
               \textrm{http://eprints.ma.man.ac.uk/1290/01/covered/MIMS{\_}ep2009{\_}44.pdf}

\bibitem {101}H. Zare `On the Bott periodicity, $J$-homomorphisms, and $H_*Q_0S^{-n}$' to appear in J. Lon. Math. Soc.

\end{thebibliography}
\end{document}